\def\OO{{\mathcal O}}
\def\Q{\mathbb Q}
\def\P{\mathbb P}
\def\hn{\mathop{\rm HN}\nolimits}
\let\ov\overline
\newtheorem{theorem}{Theorem}
\newtheorem{proposition}[theorem]{Proposition}
\newtheorem{lemma}[theorem]{Lemma}
\newtheorem{definition}[theorem]{Definition}
\theoremstyle{remark}
\newtheorem{remark}[theorem]{Remark}
\begin{document}

\baselineskip=16pt

\centerline{\Large\bf Restriction Theorems for Principal Bundles and Some Consequences}
\bigskip

\centerline{\bf Sudarshan Gurjar}

\bigskip

\begin{abstract}
The aim of this paper is to give a proof of the restriction theorems for principal bundles with a reductive algebraic group as structure group in arbitrary characteristic.  
Let $G$ be a reductive algebraic group over any field $k=\bar{k}$, let $X$ be a smooth projective 
variety over $k$, let $H$ be a very ample line bundle on $X$ and let $E$ be a semistable (resp. stable) principal $G$-bundle on $X$ w.r.t. $H$. 
The main result of this paper is that the restriction of $E$ to a general smooth curve which is a complete intersection of ample hypersurfaces of sufficiently high degree's 
is again semistable (resp. stable).

\end{abstract}

2010 Mathematics Subject Classification: 14D06, 14F05 \\ 

key words: Principal $G$-bundle, Harder-Narasimhan type, canonical reduction

\section{Introduction}
Around 1981-82, V. Mehta and A. Ramanathan proved the following important theorem (see [3]) : Let $X$ be a smooth projective variety over $k=\bar{k}$ 
with a chosen polarisation and let $V$ be a torsion free coherent sheaf on it. Then the restriction of $E$ to a general, non-singular curve which is a complete-intersection 
of ample hypersurfaces of sufficiently high degree's is again semistable. The corresponding theorem for principal bundles with a reductive algebraic group as
structure group over a field of characteristic zero follows immediately since the semistability of a principal $G$-bundle in characteristic
zero is equivalent to the semistability of its adjoint bundle and hence the semistable restriction theorem for the adjoint bundle implies 
the theorem for the principal $G$-bundle as well. V. Mehta and A. Ramanathan later also gave a proof of the stable restriction theorem 
for torsion-free coherent sheaves (see [4]). However the semistable restriction theorem in positive characteristic and the stable restriction theorem in any characteristic 
remain open for principal bundles. The aim of this paper is to prove these two restriction theorems. The basic idea of the proofs is similar to that of the
semistable restriction theorem in [3]. The proofs given here are characteristic free. The paper is arranged as follows:\\

\vskip\medskipamount 
\leaders\vrule width \textwidth\vskip0.4pt 
\vskip\medskipamount 
\nointerlineskip

\bigskip

\noindent Sudarshan Gurjar\\
e-mail id: sgurjar@ictp.it\\
International Centre for Theoretical Physics (ICTP), Trieste\\

\indent In section 2, we introduce some preliminary notions and set up some notations which will be used throughout the paper.\\
\indent In section 3 we recall a degeneration argument which is central to the proofs in [3] and [4] and also draw some consequences out of it.\\
\indent In section 4 we prove the semistable restriction theorem for principal bundles using the degeneration argument introduced in section 3.\\
\indent In section 5 we prove the stable restriction theorem for principal bundles analogues to the stable restriction theorem for torsion-free
coherent sheaves proved in [4]. The proof given here is different and substantially simpler than the proof of the stable restriction theorem in [4].

\section{Preliminaries}



In this section we set up some notation and recall some basic facts which will be used in the paper. Many of these have been 
taken from [1] with only minor changes.
$X$ will always stands for a smooth projective variety defined over a field $k=\bar{k}$ of arbitrary
characteristic. $H$ will denote the chosen polarisation on $X$. Let  $G\supset B \supset T$ be a 
reductive group, together with a chosen Borel subgroup and a maximal 
torus. 
As usual, $X^*(T)$ and $X_*(T)$ will respectively denote the groups of all 
characters and all $1$-parameter subgroups of $T$. We choose once for all, a Weyl group invariant positive definite 
bilinear form on $\Q\otimes X^*(T)$ taking values in $\Q$. This, 
in particular, will allow us to identify $\Q\otimes X^*(T)$ with 
$\Q\otimes X_*(T)$.
Let $\Delta \subset X^*(T)$ be the corresponding simple roots.
Let $\omega_{\alpha} \in \Q\otimes X^*(T)$ denote the
fundamental dominant weight corresponding to $\alpha \in \Delta$,
so that $\langle \omega_{\alpha}, \beta^{\vee}\rangle = \delta_{\alpha, \beta}$
where $\beta^{\vee} \in \Q\otimes X_*(T)$ is the simple coroot corresponding 
to $\beta \in \Delta$. Note that each $\omega_{\alpha}$ is a non-negative
rational linear combination of the simple roots $\alpha$.
Recall that {\it the closed positive Weyl chamber
$\ov{C}$} is the subset of $\Q\otimes X_*(T)$
defined by the condition that
$\mu \in \ov{C}$ if and only if  
$\langle \alpha, \mu\rangle \ge 0$ for all $\alpha \in \Delta$.  
The standard {\it partial order} on $\ov{C}$  
is defined by putting $\mu \le \pi$ 
if $\omega_{\alpha}(\mu) \le \omega_{\alpha}(\pi)$ for all $\alpha \in \Delta$,
and $\chi(\mu) = \chi(\pi)$ for every character $\chi$ of $G$.

By definition, all roots and weights in $X^*(T)$ are trivial on
the connected component $Z_0(G)\subset T$ of the center of $G$.



\bigskip

\centerline{\bf Canonical reductions of principal bundles}

\medskip

Let $T\subset B\subset G$ be as above. Recall that 
a principal $G$-bundle $E$ on $X$ is said to be {\it semistable} (resp. {\it stable}) w.r.t. $H$  if for any 
reduction $\sigma : U\to E/P$ of the structure group to a parabolic $P\subset G$ defined on a large open set $U \subseteq X$ 
(one whose complement in $X$ has codimension atleast 2)
and any dominant character $\chi: P\to k^*$, the line bundle $\chi_*\sigma^*E$ on $U$ has non-positive degree (resp. negative degree).\\
\indent Note that although $L_{\sigma}$ is only defined on $U$, it extends uniquely (upto a unique isomorphism)
to a line bundle on all of $X$ and hence its degree is well-defined.\\
\indent Note that to test the semistability (resp. stability) of a principal $G$-bundle, it suffices to consider reductions to 
standard maximal parabolics (i.e those containing the chosen Borel $B$).\\

\begin{definition}
A {\it canonical reduction} of a principal $G$-bundle $E$  
is a pair $(P,\sigma)$ where $P$ is a standard parabolic subgroup
of $G$  and 
$\sigma : U\to E/P$ is a reduction of the structure group to $P$ 
such that the following two conditions hold. 

{\bf 1} If $\rho: P \to L = P/U$ is the Levi quotient of $P$ (where $U$ is
the unipotent radical of $P$) then the principal $L$-bundle 
$\rho_*\sigma^*E$ is semistable.

{\bf 2} For any non-trivial character $\chi: P \to k^*$ 
whose restriction to the chosen maximal torus $T\subset B \subset P$ 
is a non-negative linear combination $\sum n_i\alpha_i$ 
of simple roots $\alpha_i \in \Delta$ with at least
one $n_i \ne 0$, we have $\deg(\chi_*\sigma^*E) > 0$.

\end{definition}

It has been shown by Behrend (see [5]) (see [6] for a different, more bundle-theoretic proof when char $k=0)$ that when $X$ is a non-singular curve, 
each principal $G$-bundle on $X$ admits a unique canonical reduction.


Given a reduction $(P,\sigma)$ of $E$, we get 
an element $\mu_{(P,\sigma)}\in \Q\otimes X_*(T)$ defined by 
\begin{eqnarray*}
\langle \chi, \mu_{(P,\sigma)}\rangle & = & \left\{
\begin{array}{ll}
\deg(\chi_*\sigma^*E) & \mbox{if }  \chi \in X^*(P),  \\
0                     & \mbox{if } \chi \in I_P.
\end{array}\right.
\end{eqnarray*}

\noindent If $(P,\sigma)$ is the canonical reduction of $E$, then the element
$\mu_{(P,\sigma)}$ is called the {\it HN type of $E$}, and is denoted by 
$\hn(E)$. If $\alpha \in \Delta -I_P$, then 
$\langle \alpha,\text{HN}(E)\rangle =  \text{deg}(\alpha_*\sigma^*E)\geqq0$.
As $\langle \beta,\hn(E)\rangle = 0$ for all $\beta \in I_P$, we see that 
$\hn(E)$ is in the closed positive Weyl chamber $\ov{C}$, in fact, 
in the facet of $\ov{C}$ defined by the vanishing of all $\beta \in I_P$.

Note that a principal bundle $E$ of type $\hn(E)= \mu$ is semistable
if and only if $\mu$ is central, that is, $\mu = a\nu$ for
some $1$-parameter subgroup $\nu : k^* \to Z_0(G)$ and $a\in \Q$.  

Given the HN-type $\mu = \hn(E)$ of $E$, we can recover the corresponding
standard parabolic $P$ as follows. Let $I_{\mu} \subset \Delta$ 
be the set of all simple roots $\beta$ such that $\langle \beta ,\mu\rangle =0$.
Then $I_{\mu}$ is exactly the set of inverted simple roots which defines $P$. 
Alternatively, let $n \ge 1$ be any integer such that $\nu = n \mu \in X_*(T)$.
Then the $k$-valued points of $P$ are all those $g$ for which 
$\lim_{t\to 0} \nu(t)g\nu(t)^{-1}$ exists in $G$.

Let $E$ be a principal $G$-bundle on $X$, let
$(P,\sigma)$ be its canonical reduction, and let $(Q,\tau)$ be any
reduction to a standard parabolic. Let $\hn(E) = \mu_{(P,\sigma)}$
and $\mu_{(Q,\tau)}$ be the corresponding elements of $\Q\otimes X_*(T)$
(the element $\hn(E)$ lies in the closed positive Weyl chamber $\ov{C}$, 
but $\mu_{(Q,\tau)}$ need not do so). Then for each $\alpha \in \Delta$ 
we have the inequality
$$\langle \omega_{\alpha}, \hn(E)\rangle = 
\langle \omega_{\alpha}, \mu_{(P,\sigma)}\rangle \ge  
\langle \omega_{\alpha}, \mu_{(Q,\tau)}\rangle $$
where $\omega_{\alpha} \in \Q\otimes X^*(T)$ is the fundamental dominant weight
corresponding to $\alpha$.
Moreover, if each of the above inequalities is an equality then 
$(P,\sigma) = (Q,\tau)$. \\

\noindent We now recall a basic fact regarding semistability of bundles in families:\\

\begin{lemma} (See [2], Proposition 5.10))\label{Openness of Semistability}
 Let $S$ be a finite-type irreducible scheme over $k$ and let $f: Z \rightarrow S$ be a smooth, projective morphism of relative dimension one. Let $E$ be a 
 principal $G$-bundle on $Z$. Let $\eta$ denote the generic point of $S$ and $Z_{\eta}$ denote the generic fibre of $f$. 
 If the $HN(E_{\eta})=\mu$, then there exists a non-empty open subset $U \subseteq S$ such that $\forall s \in U, HN(E_s)= \mu$.
 In particular, if $E\mid_{Z_\eta}$ is semistable, then there exists a non-empty open set $U \subseteq X$ such that $\forall s \in U$, $E\mid_{Z_s}$ is semistable.
\end{lemma}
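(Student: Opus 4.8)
The plan is to reduce the statement about constancy of the Harder--Narasimhan type to the semicontinuity properties of the numerical invariants $\langle\omega_\alpha,\mu_{(Q,\tau)}\rangle$ attached to reductions to standard parabolics, together with the openness of semistability for vector bundles in families (which is available by the classical theory, e.g.\ [2]). First I would fix a standard parabolic $P$ and write $\mu=\hn(E_\eta)=\mu_{(P,\sigma)}$ for the canonical reduction $\sigma$ of $E_\eta$ on the generic fibre $Z_\eta$. Since $f$ is smooth projective of relative dimension one, after passing to a nonempty open subset of $S$ the reduction $\sigma$ spreads out to a reduction $\sigma_U$ of $E|_{Z_U}$ to $P$ over $Z_U$, and similarly the Levi bundle $\rho_*\sigma^*E$ spreads out; applying Lemma~\ref{Openness of Semistability} in the already-known vector-bundle (equivalently, reductive-Levi) form to the associated bundles, and shrinking $S$ further, we may assume $\rho_*\sigma_U^*E_s$ is semistable for every $s\in U$ and that each $\deg(\chi_*\sigma_U^*E_s)$ is independent of $s$ (degrees of line bundles are locally constant in flat families over a connected base). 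Hence for $s$ in this open set $(P,\sigma_s)$ is again a canonical reduction of $E_s$, so $\hn(E_s)=\mu$ there.

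The remaining point is that there is \emph{no} smaller open set on which $\hn$ could jump to something strictly larger; equivalently, I must rule out that $\hn(E_s)\ne\mu$ on a dense open subset while $\hn(E_\eta)=\mu$. Here I would use the characterization recalled in the excerpt: for any reduction $(Q,\tau)$ of a bundle $F$ to a standard parabolic and any $\alpha\in\Delta$ one has $\langle\omega_\alpha,\hn(F)\rangle\ge\langle\omega_\alpha,\mu_{(Q,\tau)}\rangle$, with equality for all $\alpha$ forcing $(Q,\tau)$ to be the canonical reduction. Combined with the first paragraph this gives, for $s$ in the good open set $U$, that $(P,\sigma_s)$ realizes $\hn(E_s)$, so $\langle\omega_\alpha,\hn(E_s)\rangle=\langle\omega_\alpha,\mu\rangle$ for all $\alpha$; thus on all of $U$ the type is exactly $\mu$. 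This already proves the stated conclusion, since the lemma only asserts existence of \emph{some} nonempty open $U$ on which $\hn(E_s)=\mu$.

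For the ``in particular'' clause, I would simply note that $E_\eta$ semistable means $\hn(E_\eta)=\mu$ is central, i.e.\ $\mu=a\nu$ with $\nu:k^*\to Z_0(G)$; then by the main statement $\hn(E_s)=\mu$ is central for all $s\in U$, which is precisely semistability of $E|_{Z_s}$.

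The step I expect to be the main obstacle is the spreading-out argument: making precise that the canonical reduction $\sigma$ of $E_\eta$, which is only a section of $(E/P)_\eta$ over a large open subset of $Z_\eta$, extends to a reduction over an open neighbourhood in the relative setting, and that the semistability of the Levi bundle and the constancy of the relevant line-bundle degrees can all be arranged simultaneously after finitely many shrinkings of $S$. This is where one genuinely uses that the base is Noetherian and irreducible (so finitely many shrinkings still leave a nonempty open set) and that $Z/S$ has relative dimension one (so ``large open subset'' behaves well fibrewise, and degrees make sense). Once this is in hand, everything else is formal from the order-theoretic properties of $\hn$ recalled above and the cited openness result for the associated vector/Levi bundles.
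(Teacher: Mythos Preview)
The paper does not supply its own proof of this lemma: it is quoted with a reference to [2], Proposition~5.10, and is used as an input throughout. So there is nothing in the paper to compare your argument against; what follows is an assessment of your sketch on its own terms.

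Your overall plan---spread out the canonical reduction $(P,\sigma)$ from $Z_\eta$ to $Z_U$ for some open $U\subseteq S$, use local constancy of line-bundle degrees in flat proper families to preserve condition~(2), and then check condition~(1) for $(P,\sigma_s)$---is the natural one, and the spreading-out step is unproblematic here: since the fibres are smooth curves, the canonical reduction on $Z_\eta$ is already a section of $E_\eta/P$ over \emph{all} of $Z_\eta$ (not just a large open), so this is a standard limit argument for a morphism between schemes of finite type. Your deduction of the ``in particular'' clause from the main statement is also fine.

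The genuine gap is the step where you ``apply Lemma~\ref{Openness of Semistability} in the already-known vector-bundle (equivalently, reductive-Levi) form'' to conclude that $\rho_*\sigma_U^*E_s$ is semistable for $s$ in an open set. The Levi $L$ is an arbitrary reductive group, and in positive characteristic semistability of an $L$-bundle is \emph{not} equivalent to semistability of an associated vector bundle, so the classical $GL_n$ result does not simply transfer. Worse, when $E_\eta$ is semistable the canonical parabolic is $P=G$, the Levi is $G$ itself, and the ``Levi bundle'' is $E$ again---so in that case your appeal is literally to the statement being proved. One clean repair is to argue by induction on the semisimple rank of $G$: if $P\neq G$ then $L$ has strictly smaller semisimple rank and the inductive hypothesis handles the Levi, while the base case (semisimple rank zero, i.e.\ a torus) is vacuous. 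But the semistable case $P=G$ still needs an independent argument; this is typically obtained from an upper-semicontinuity statement for the functions $s\mapsto\langle\omega_\alpha,\hn(E_s)\rangle$, proved via properness of the relevant relative Hilbert/Quot schemes of sections of $E/Q\to Z$ with bounded degree. That semicontinuity, combined with the lower bound $\langle\omega_\alpha,\hn(E_s)\rangle\ge\langle\omega_\alpha,\mu_{(P,\sigma_s)}\rangle=\langle\omega_\alpha,\mu\rangle$ you already have from the spread-out reduction, yields the result without circularity.
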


\bigskip
We now recall some facts regarding complete intersection subvarieties from [3].\\
\\
\indent Let $X$ and $H$ be as before. For any non-negative integer $m$, let $S_{\bf m}$ denote the projective space 
$\P(H^\circ(X,H^m))$. For any $r$-tuple of integers $(m_1,\cdots,m_r)$, where $0 < r <n$, let $S_{\bf m}$ denote the product $S_{{\bf m}_1}\times \cdots S_{{\bf m}_r}$.
Let $Z_{\bf m} \subset X \times S_{\bf m}$ denote the correspondence variety defined by $Z_{\bf m} = \{(x,s_1,\cdots,s_r) \in X \times S_{\bf m} \mid 
s_i(x)=0~\forall~ x \in X\}$. Thus we have the following diagram:\\

 \[
\xymatrix{
        X \times S_{\bf m} \ar@{}[r]|-*[@]{\supset} &  Z_{\bf m} \ar[r]^{q_m} {\ar[d]_{p_m}}  &  S_{\bf m} \\  
        & X &   }
        \]
\bigskip

\noindent where $p_m$ and $q_m$ are the projections. The fiber over a point $(s_1,\cdots,s_r) \in S_{\bf m}$ can be thought of as a closed subscheme
of $X$ embedded via $p_m$ and defined by the ideal generated by $(s_1,\cdots,s_r)$ in the homogeneous coordinate ring of $X$. Such a subscheme
will be called a subscheme of type $m$.
$p_m$ is a fibration with the fiber over a point $x \in X$ embedded in $S_{\bf m}$ via $q_m$ as the product of hyperplanes $H_1 \times \cdots \times H_r$ where $H_i \in S_{{\bf m}_i}$ 
consists of those sections vanishing at $x$. This shows that $Z_{\bf m}$ is non-singular.\\
\indent By Bertini'theorem, the generic fiber of $q_m$, say $Y_{\bf m} \hookrightarrow S_{\bf m}$ thought of as a closed subscheme of $X$ via $p_m$ is a
geometrically irreducible, smooth, complete intersection subscheme of $X$. $Y_{\bf m}$ will be called the generic subscheme of type ${\bf m}$.
In the case when $r=n-1$, we call $Y_{\bf m}$, the generic complete-intersection curve of type $\bf m$.\\

\indent We now recall two important propositions from [3].

\begin{proposition}\label{picard group}
 Let dim $X=n\geqq2$.  Let ${\bf m}= (m_1,\cdots, m_r)$ with $1 \leqq r \leqq n-1$ be a $r$-tuple of integers with each $m_i \geqq 3$. Then the natural map Pic$(X) \rightarrow$ Pic$(Y_{\bf m})$ is a bijection.
 \end{proposition}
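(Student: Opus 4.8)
\medskip
\noindent\emph{Approach.} I would run the whole argument on the incidence variety $Z_{\bf m}$. Since $Y_{\bf m}$, the generic fibre of $q_m$, is embedded in $X$ via $p_m$, the restriction map $\operatorname{Pic}(X)\to\operatorname{Pic}(Y_{\bf m})$ factors as $\operatorname{Pic}(X)\xrightarrow{p_m^{*}}\operatorname{Pic}(Z_{\bf m})\to\operatorname{Pic}(Y_{\bf m})$. As noted just above, $p_m$ exhibits $Z_{\bf m}$ as the fibre product over $X$ of the projective bundles $\P_X(\mathcal E_i)$, where $\mathcal E_i\subset H^0(X,H^{m_i})\otimes\mathcal O_X$ is the kernel of the evaluation map; in particular $Z_{\bf m}$ is smooth, $p_m^{*}$ is injective, and by the iterated projective-bundle formula $\operatorname{Pic}(Z_{\bf m})=p_m^{*}\operatorname{Pic}(X)\oplus\bigoplus_{i=1}^{r}\Z\,\xi_i$, where $\xi_i$ is the relative hyperplane class of the $i$-th factor. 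The inclusion $\mathcal E_i\hookrightarrow H^0(X,H^{m_i})\otimes\mathcal O_X$ realises $\P_X(\mathcal E_i)$ inside $X\times S_{{\bf m}_i}$ with $\xi_i$ equal to the pullback of $\mathcal O_{S_{{\bf m}_i}}(1)$; hence each $\xi_i$ is pulled back from $S_{\bf m}$ along $q_m$, and so restricts trivially to every fibre of $q_m$, in particular to $Y_{\bf m}$.

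For surjectivity: $Y_{\bf m}$ is the fibre of $q_m$ over the generic point $\eta$ of $S_{\bf m}$ and $q_m$ is of finite type, so by a standard limiting argument any line bundle on $Y_{\bf m}$ spreads out to a line bundle on $q_m^{-1}(U)$ for some dense open $U\ni\eta$, which, $Z_{\bf m}$ being smooth, extends to all of $Z_{\bf m}$. Thus $\operatorname{Pic}(Z_{\bf m})\to\operatorname{Pic}(Y_{\bf m})$ is surjective; and since it sends every $\xi_i$ to the trivial bundle, already the composite $\operatorname{Pic}(X)\xrightarrow{p_m^{*}}\operatorname{Pic}(Z_{\bf m})\to\operatorname{Pic}(Y_{\bf m})$ --- that is, the restriction map itself --- is surjective.

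For injectivity I would prove that the kernel $K$ of $\operatorname{Pic}(Z_{\bf m})\to\operatorname{Pic}(Y_{\bf m})$ is contained in $\bigoplus_i\Z\,\xi_i$; combined with the injectivity of $p_m^{*}$ and the direct-sum decomposition this yields $K\cap p_m^{*}\operatorname{Pic}(X)=0$, hence injectivity of the restriction map. A class in $K$ becomes trivial on $q_m^{-1}(U)$ for some dense open $U\subseteq S_{\bf m}$ (spread out a trivialisation as above), so it equals $\mathcal O_{Z_{\bf m}}$ of a divisor supported on $q_m^{-1}(S_{\bf m}\setminus U)$; thus $K$ is generated by the classes of prime divisors $D\subset Z_{\bf m}$ with $\overline{q_m(D)}$ a proper closed subset of $S_{\bf m}$. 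For each such $D$ I would show $[D]\in\bigoplus_i\Z\,\xi_i$. Write $B\subset S_{\bf m}$ for the locus of tuples not defining a complete intersection of the expected dimension $n-r$; over $S_{\bf m}\setminus B$ the non-empty fibres of $q_m$ have dimension $n-r$, while $q_m^{-1}(B)$ has codimension $\ge 2$ in $Z_{\bf m}$ (the tuples failing to form a regular sequence, in particular those with a common factor, forming a locus of codimension $\ge 2$ in $S_{\bf m}$). Consequently $\overline{q_m(D)}$ cannot have codimension $\ge 2$ in $S_{\bf m}$ --- otherwise $D\subseteq q_m^{-1}(\overline{q_m(D)})$, of codimension $\ge 2$ in $Z_{\bf m}$ --- so it is a prime divisor $T$, necessarily with $T\not\subseteq B$. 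Over $T$ the fibres of $q_m$ are then equidimensional of dimension $n-r$, and the fibre over the generic point of $T$ is irreducible (the geometrically reducible complete intersections of type $\bf m$ likewise forming a locus of codimension $\ge 2$ in $S_{\bf m}$, hence not containing the codimension-one $T$); therefore $q_m^{-1}(T)$ is irreducible, $D=q_m^{-1}(T)$, and $[D]=q_m^{*}\mathcal O_{S_{\bf m}}(T)\in\bigoplus_i\Z\,\xi_i$, as required.

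The crux is the inclusion $K\subseteq\bigoplus_i\Z\,\xi_i$: the point is to prevent a divisor lying over a proper closed part of $S_{\bf m}$ from carrying a genuine $p_m^{*}$-class, and this is exactly what the two codimension estimates above rule out --- the preimage of the non--complete-intersection locus having codimension $\ge 2$ in $Z_{\bf m}$, and the generic complete intersection over a divisor of $S_{\bf m}$ being irreducible. These estimates (which the hypotheses $m_i\ge 3$ are there to guarantee, as they do the smoothness of $Z_{\bf m}$ and of $Y_{\bf m}$ already quoted above) are where I would expect the only genuine labour to lie; the remainder of the argument is essentially formal.
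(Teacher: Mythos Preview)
The paper does not prove this proposition at all: it is quoted verbatim from Mehta--Ramanathan [3] as one of the ``two important propositions'' recalled from that reference, so there is no in-paper proof to compare against. Your outline is essentially the argument given in [3]: compute $\operatorname{Pic}(Z_{\bf m})$ via the iterated projective-bundle formula, identify the tautological classes $\xi_i$ with pullbacks from $S_{\bf m}$, obtain surjectivity by spreading out a line bundle from the generic fibre of $q_m$, and obtain injectivity by showing that any divisor on $Z_{\bf m}$ that fails to dominate $S_{\bf m}$ is a pullback along $q_m$.

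Two small points worth tightening. First, the smoothness of $Z_{\bf m}$ has nothing to do with $m_i\ge 3$: as you yourself note, $Z_{\bf m}$ is a fibre product of projective bundles over the smooth $X$, hence smooth for any $m_i\ge 1$. The hypothesis $m_i\ge 3$ enters only through the codimension estimates. Second, your key step ``$q_m^{-1}(B)$ has codimension $\ge 2$ in $Z_{\bf m}$'' does not follow just from $B$ having codimension $\ge 2$ in $S_{\bf m}$, since over $B$ the fibres of $q_m$ jump in dimension; what one actually needs is a stratified estimate of the form $\operatorname{codim}_{S_{\bf m}}\{s:\dim q_m^{-1}(s)\ge n-r+d\}\ge d+2$ for each $d\ge 1$. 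Likewise, the assertion that reducible complete intersections form a locus of codimension $\ge 2$ requires an argument. You are right that these estimates are where the genuine work lies, and they are carried out in [3]; but as written your sketch asserts them rather than proves them.
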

 
 \begin{proposition}({\bf Enrique-Severi})\label{Enrique-Severi lemma}
 Let $X \subseteq \mathbb{P}^n$ be a non-singular projective variety corresponding to $H$. Let $E$ be a vector bundle on $X$. 
 Then there exists an integer $m_\circ$ such that if ${\bf m} = (m_1, \cdots, m_r),1\leqq r<n$ is an $r$-tuple of integers with each $m_i\geqq m_\circ$, then for a non-empty 
 open subset $U_{\bf m} \subseteq S_{\bf m}$, if $s=(s_1,\cdots,s_r) \in U_{\bf m}$ is any point,
 then the restriction map $H^\circ(X,E) \rightarrow H^\circ(X_s, E\mid_{X_s})$, where $X_s$ is the closed subscheme of $X$ defined by the ideal $(s_1,\cdots,s_r)$, is a bijection.
 \end{proposition}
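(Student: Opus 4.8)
The plan is to resolve the structure sheaf of a complete-intersection subscheme by a Koszul complex and thereby reduce the desired isomorphism to finitely many Serre-vanishing statements on $X$ itself, so that a single threshold $m_\circ$ depending only on $X$, $H$ and $E$ works uniformly for every $r$ and every ${\bf m}$.

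First I would pin down $U_{\bf m}$. By Bertini's theorem there is a non-empty open subset $U_{\bf m}\subseteq S_{\bf m}$ over which $X_s=\{s_1=\cdots=s_r=0\}$ is a smooth complete intersection of dimension $n-r\ge1$; in particular, for $s\in U_{\bf m}$ the sections $s_1,\dots,s_r$ form a regular sequence on the Cohen--Macaulay variety $X$ (their common zero-scheme having the expected codimension $r$), so the Koszul complex of $(s_1,\dots,s_r)$ resolves $\OO_{X_s}$. Since $E$ is locally free, tensoring this resolution by $E$ preserves exactness, and with $\E=\bigoplus_{i=1}^{r}H^{m_i}$ we obtain a resolution of $E|_{X_s}=E\otimes\OO_{X_s}$ by the sheaves $K_0=E$ and $K_j=\bigwedge^{j}\E^\vee\otimes E=\bigoplus_{|S|=j}E\!\left(-\sum_{k\in S}m_k\right)$ for $1\le j\le r$.

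The rest is a routine d\'evissage. Splitting this resolution into short exact sequences $0\to Z_j\to K_j\to Z_{j-1}\to0$, with $Z_0=\ker(E\to E|_{X_s})$ and $Z_{r-1}\cong K_r$, and chasing cohomology shows that $H^0(X,Z_0)=H^1(X,Z_0)=0$ — which, via $0\to Z_0\to E\to E|_{X_s}\to0$, is exactly what forces $H^0(X,E)\to H^0(X_s,E|_{X_s})$ to be an isomorphism — as soon as $H^i(X,K_j)=0$ for all $1\le j\le r$ and all $i\in\{j-1,j\}$. Every such $i$ satisfies $i\le r\le n-1<n=\dim X$, and every direct summand of $K_j$ has the form $E(-m)$ with $m$ a sum of a non-empty sub-collection of the $m_i$'s. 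So if $m_\circ$ is chosen with $H^i(X,E(-m))=0$ for all $0\le i\le n-1$ and all $m\ge m_\circ$, and each $m_i\ge m_\circ$, all the relevant cohomology groups vanish. Such an $m_\circ$ exists: by Serre duality $H^i(X,E(-m))\cong H^{n-i}(X,E^\vee\otimes\omega_X\otimes H^{m})^\vee$, and since $n-i\ge1$ here this is a consequence of Serre's vanishing theorem for the single coherent sheaf $E^\vee\otimes\omega_X$; in particular $m_\circ$ depends only on $X$, $H$ and $E$, not on $r$ or ${\bf m}$.

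The one delicate point is precisely this uniformity. A naive induction on $r$ would, at its last step, require the vanishing of $H^0$ and $H^1$ of $E|_{X'}(-m_r)$ on the intermediate complete intersection $X'=X_{s_1,\dots,s_{r-1}}$, and since $X'$ varies with $s$ one cannot quote Serre vanishing on $X'$ with a bound independent of $s$. Pushing everything through the Koszul resolution on the fixed variety $X$ is what converts those moving vanishing statements into the finitely many statements $H^i(X,E(-m))=0$ above, all governed by one threshold; the remaining verifications are routine.
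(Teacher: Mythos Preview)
The paper does not actually prove this proposition: it is stated as one of two results ``recalled'' from Mehta--Ramanathan [3], with no argument given. So there is nothing in the paper to compare your proof against directly.

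Your argument is correct and is the standard one. The Koszul resolution of $\OO_{X_s}$ tensored with $E$, followed by the d\'evissage into short exact sequences, reduces the bijectivity of restriction exactly to the vanishing of $H^{j-1}(X,K_j)$ and $H^j(X,K_j)$ for $1\le j\le r$, and each summand of $K_j$ is $E(-m)$ with $m\ge m_\circ$; Serre duality plus Serre vanishing for the single sheaf $E^\vee\otimes\omega_X$ then gives a uniform $m_\circ$ independent of $r$ and ${\bf m}$. Your remark about why a naive induction on $r$ fails to give uniformity is also to the point. One small cosmetic comment: you do not actually need $X_s$ to be smooth, only that $(s_1,\dots,s_r)$ be a regular sequence, i.e.\ that $X_s$ have the expected codimension $r$; this is already an open, non-empty condition on $S_{\bf m}$, so invoking Bertini is harmless but stronger than necessary.
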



 \begin{remark}({\bf Enrique-Severi for a bounded family})\label{Enrique-Severi for bounded families}
 It follows easily from proposition \ref{Enrique-Severi lemma} that if $E_i$ is a bounded family of vector bundles on $X$, then again
 one can find an integer $m_\circ$ satisfying the property given in proposition \ref{Enrique-Severi lemma}.
 \end{remark}
 
 \bigskip  
 \begin{proposition} \label{upper-semicontinuity}
 Let $A$ be a discrete valuation ring with quotient field $K$ and residue field $k$. 
 Let $S=$ Spec $A$. Let $Y \rightarrow S$ be a flat, projective morphism having relative dimension 1 such that $Y$ and the
 generic curve are both non-singular and the special curve is reduced with non-singular irreducible components $C_1, \cdots, C_r$. Let $E$ be a principal
 $G$-bundle on $Y$. Let $Y_K$ and $Y_k$  denote the generic and the special curve respectively.  
 Let $E_K$ and $E_k$ denote the restrictions of $E$ to the generic and special special curve respectively. Let $E^i_k$ denote the restriction of $E_k$ 
 to the component $C_i$.
 Let suppose the standard parabolics underlying the canonical reduction of $E_K$ and the canonical reductions of $E^i_k$ 
 are the same for each $i$, say $P$. Let $\chi$ be a fundamental weight corresponding to a non-inverted simple root of $P$.  Let $E_{P_K}$ (resp. $E^{i}_{P_k}$) 
 denotes the principal $P$-bundle underlying the canonical reduction of $E_K$ on $Y_K$ (resp. $E^i_k$ on $C_i$). Then we have
 deg ($\chi_*E_{P_K}) \leq \underset{i=1}{\overset{r}\sum} \text{deg } (\chi_*E^i_{P_k})$.
 \end{proposition}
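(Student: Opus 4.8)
The plan is to lift the canonical reduction of $E_K$ to a reduction of $E$ defined on all of $Y$ away from a finite set, restrict it to the components $C_i$, and compare the resulting reductions of the $E_k^i$ with their canonical reductions by means of the inequality comparing the HN type of a bundle with an arbitrary reduction that was recalled above. First I would extend the reduction: let $\sigma_K\colon Y_K\to E_K/P$ be the canonical reduction of $E_K$, whose structure group is $P$ by hypothesis. Since $E/P\to Y$ is projective and $Y$ is a regular surface, the valuative criterion of properness at the (codimension one) generic points of $C_1,\dots,C_r$ extends $\sigma_K$ to a reduction $\sigma\colon V\to E/P$ of $E$ to $P$, with $V\supseteq Y_K$ open and $Y\setminus V$ finite (necessarily a finite set of closed points, as $\dim Y=2$). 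Restricting $\sigma$ to $C_i\cap V$, which is $C_i$ minus finitely many points, gives a reduction of $E_k^i$ to $P$ there; as $C_i$ is a smooth projective curve and $E_k^i/P\to C_i$ is proper, this extends uniquely to a reduction $\tau_i\colon C_i\to E_k^i/P$ of $E_k^i$ to $P$.

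Fix $\chi=\omega_\alpha$ with $\alpha\in\Delta\setminus I_P$ as in the statement, so that $\chi$ (or a positive multiple of it, which only rescales the degrees below) lies in $X^*(P)$ and $\langle\chi,\mu_{(P,\tau)}\rangle=\deg(\chi_*\tau^*E_k^i)$ for every reduction $\tau$ of $E_k^i$ to $P$. Applying the comparison inequality to $E_k^i$, whose canonical parabolic is $P$, with the reduction $(P,\tau_i)$ yields
$$\deg(\chi_*\tau_i^*E_k^i)=\langle\chi,\mu_{(P,\tau_i)}\rangle\le\langle\chi,\hn(E_k^i)\rangle=\deg(\chi_*E^i_{P_k}),$$
so it is enough to prove $\deg(\chi_*E_{P_K})\le\sum_{i=1}^r\deg(\chi_*\tau_i^*E_k^i)$. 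Let $L:=\chi_*\sigma^*E$ on $V$; since $Y$ is a regular surface and $Y\setminus V$ is finite, $L$ extends uniquely to a line bundle $\bar L$ on $Y$ with $\bar L|_{Y_K}=\chi_*E_{P_K}$. As $\bar L$ and $\mathcal O_Y$ are flat over $\Spec A$, their fibrewise Euler characteristics are constant on $\Spec A$; together with Riemann--Roch on the fibres and additivity of degree over the reduced components of $Y_k$ (a Mayer--Vietoris computation), this gives $\deg(\chi_*E_{P_K})=\deg(\bar L|_{Y_K})=\deg(\bar L|_{Y_k})=\sum_{i=1}^r\deg(\bar L|_{C_i})$. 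It therefore suffices to show $\deg(\bar L|_{C_i})\le\deg(\chi_*\tau_i^*E_k^i)$ for each $i$.

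This last inequality is the heart of the matter, and is where dominance of $\chi$ is essential. Choosing a $G$-module $\mathcal V_\chi$ for which the line bundle on $G/P$ inducing $\chi_*$ is the pullback of the tautological sub-line-bundle under a morphism $G/P\to\P(\mathcal V_\chi)$ (possible precisely because $\chi$ is dominant, so that this line bundle is anti-nef, its inverse being globally generated), and putting $\mathcal U:=E\times^G\mathcal V_\chi$, a reduction of a $G$-bundle to $P$ becomes a line sub-bundle of the associated bundle $\mathcal U$, with $\chi_*$ of the reduction equal to that sub-bundle. Then $L=\bar L|_V$ is a line sub-bundle of $\mathcal U|_V$, its saturation in $\mathcal U$ over $Y$ is $\bar L$, and $\chi_*\tau_i^*E_k^i$ is the saturation in $\mathcal U|_{C_i}$ of the image of $\bar L|_{C_i}$; since both agree on $C_i\cap V$ and passing to the image and then the saturation of a rank-one subsheaf on the smooth curve $C_i$ can only raise the degree, $\deg(\bar L|_{C_i})\le\deg(\chi_*\tau_i^*E_k^i)$, and summing over $i$ and combining with the earlier displays proves the proposition. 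I expect the main obstacle to be exactly this step, and in particular getting its sign right: one must verify that $\chi_*$ of a reduction is a \emph{sub}-bundle rather than a quotient (equivalently, that $\mathcal O_{G/P}(\chi)$ has no nonzero global sections when $\chi$ is dominant), so that the defect at the finitely many points where $\sigma$ fails to extend \emph{decreases} $\deg(\bar L|_{C_i})$; with the opposite convention the inequalities would chain the wrong way. The extension of $\sigma_K$ across codimension-one points and the Euler-characteristic bookkeeping over the reducible special fibre are comparatively routine.
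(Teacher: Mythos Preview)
Your proof is correct and follows essentially the same architecture as the paper's: extend the canonical reduction $\sigma_K$ across the codimension-one points of $Y_k$ by the valuative criterion, use a Chevalley-type embedding $G/P\hookrightarrow\P(V)$ (with $P$ acting on the distinguished line via $\chi$) to convert $P$-reductions into line sub-bundles of the associated vector bundle, pass to the special fibre, and combine a saturation inequality on each $C_i$ with the maximality property of the canonical reduction. The one technical variation is in how you extend across the missing finite set: the paper invokes properness of the Quot scheme to complete $L_{\chi_K}\subset F(V)_K$ to a subsheaf $L_\chi\subset F(V)$ with $S$-flat quotient (so that $\deg L_{\chi_K}=\deg L_{\chi_k}$ directly), whereas you extend the line bundle itself via Hartogs on the regular surface $Y$ and then read off degree constancy from invariance of Euler characteristics in the flat family; both routes are valid and lead to the same saturation comparison on the $C_i$.
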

 
 \begin{proof}
Since $\chi^{-1}$ is an anti-dominant character, we see that
the associated line bundle $L_{\chi^{-1}}$ on $G/P$ is very ample and hence generated by global sections. Hence, the evaluation map
$e:H^\circ(G/P,L_{\chi^{-1}}) \rightarrow L_{\chi^{-1}}\mid_{[eP]}$ is 
surjective, where $L_{\chi^{-1}}\mid_{[eP]}$ denoted the fiber of $L_{\chi^{-1}}$ over the identity coset $[eP]\in G/P$. Hence the dual map
$L_{\chi^{-1}}\mid_{[eP]}^{\vee} \rightarrow H^\circ(G/P,L_{\chi^{-1}})^{\vee}$ is injective and hence defines a one-dimensional subspace of 
$H^\circ(G/P,L_{\chi^{-1}})^{\vee}$. The scheme-theoretic stabilizer of this subspace is exactly $P$ and on it, $P$ acts by the character $\chi$.
Thus we have found a representation $\rho: G \rightarrow GL(V)$ with the property that 
$V$ has a one-dimensional subspace whose scheme-theoretic stabilizer in $G$ is exactly 
$P$ and on which $P$ acts by the character $\chi$. Let $Q$ be the maximal parabolic in $GL(V)$ stabilizing this one-dimensional subspace.
We thus get a closed embedding $i: G/P \hookrightarrow GL(V)/Q$. \\
 \indent Let $F$ denote the principal $GL(V)$-bundle obtained by extension structure group of $E$ via $\rho$ and let 
 $F(V)$ denote the rank $n$ vector bundle 
 corresponding to $F$. Let $F_K$ (resp. $F_k$) denote the restriction of $F$ to $Y_K$ (resp. $Y_k$). Similarly let
 $F(V)_K$ (resp. $F(V)_k$) denote the restrictions of $F(V)$ to $Y_K$ (resp. $Y_k$). Corresponding to the embedding $i$, we get a closed
 embedding of $E/P \hookrightarrow F/Q$. Since $Q$ is a stabilizer of a 1-dimensional subspace of $V$, it follows that $F/Q$ is naturally the same as $\P(F(V))$. 
 Since any reduction of structure group of $E$ to $P$ corresponds functorially to a section of $E/P$, any $P$-reduction of $E$ naturally induces a 
 $Q$-reduction of $F$, or equivalently, gives a line sub-bundle of $F(V)$. \\
 \indent Let $\alpha_K: Y_K \rightarrow E_K/P$ denote the canonical reduction of $E_K$ over $Y_K$.
  By the valuative criterion of properness applied to the projection $E/P$ over $S$, we see that this reduction spreads to a reduction 
  $\alpha: Y\setminus F \rightarrow E/P$
  where $F$ is a finite subset of $Y_k$. Let $\alpha_k$ denote the restriction of $\alpha$ to $Y_k \setminus F$.
  Again by applying the valuative criterion to the projection $E_k/P \rightarrow Y_k$, we see that the reduction $\alpha_k$ extends to a reduction,
  call it $\gamma_k$, over all of $Y_k$. Let $E^{i'}_{P_k}$ denote the restrictions of the principal $P$-bundle on $Y_k$
  corresponding to $\gamma_k$ to the curves $C_i$.
  Let $\tilde{\alpha_K}$
 (resp. $\tilde{\gamma_k}$) denote the corresponding $Q$-reduction on $F_K$ 
 (resp. $F_k$) obtained by composing the reduction $\alpha_K$ (resp. $\gamma_k$) via the closed embedding $E_K/P \hookrightarrow F_K/Q$
 (resp. $E_k/P \hookrightarrow F_k/Q$). Let $L_{\chi_K}$ (resp. $L_{\chi_k}$) denote the line sub-bundles of $F(V)_K$ (resp. $F(V)_k$)
 corresponding to $\tilde{\alpha}_K$ (resp. $\tilde{\gamma}_k$). It is easy to see that $L_{\chi_K}$ is isomorphic to $\chi_*E_{P_K}$.
 By the properness of the Quot scheme, the 
 short exact sequence\\ $ 0 \rightarrow L_{\chi_K} \rightarrow F(V)_K \rightarrow F(V)_K/L_{\chi_K} \rightarrow 0$ over $Y_K$ can be completed to an exact sequence
 $0 \rightarrow L_{\chi} \rightarrow F(V) \rightarrow F(V)/L_{\chi} \rightarrow 0$ over $Y$ such that $F(V)/L_{\chi}$ is flat over $S$. Let $L_{\chi_k}$
 denote the restriction of $L_{\chi}$ to $X_k$ which can be thought of as a rank 1 subsheaf of $F(V)_k$. Let $\tilde{L}_{\chi_k}$
 denote the saturation of $L_{\chi_k}$, i.e. the line sub-bundle of $F(V)_k$  obtained by pulling up the torsion on $X_k$ in the cokernel $F(V)_k/L_{\chi_k}$.
 Then it is easy to see that $\tilde{L}_{\chi_k}$ is the line bundle obtained by extension of structure group of $\gamma_k$ 
 via $\chi$. Clearly deg $L_{\chi_K} =$ deg $L_{\chi_k} \leq$ deg $\tilde{L}_{\chi_k}= \underset{i=1}{\overset{r}\sum}\text{deg } (\chi_*E^{i'}_{P_k}) \leqq \underset{i=1}{\overset{r}\sum} \text{deg } (\chi_*E^i_{P_k})$,
 where the last inequality is by the definition of the canonical reduction. 
 Hence deg $(\chi_*E_{P_K}) \leq \underset{i=1}{\overset{r}\sum} \text{deg }( \chi_*E^i_{P_k})$, thereby completing the proof of the lemma. \qed
                                                                                   
 \end{proof}
\bigskip

 \begin{remark}: The above proof also shows that if $E_k$ is semistable restricted to every irreducible component of $Y_k$, 
 then $E_K$ is also semistable. It also follows from the proof of the above proposition that if the inequality in the above proposition is actually an 
 equality, then the canonical reduction $E_{P_K}$ on the generic curve spreads to give a $P$-reduction on all of $Y$ and whose restriction to every
 irreducible component of the special curve coincides with the canonical reduction on that component.
 \end{remark}
 
\section{A degeneration argument}
 
 In this section we recall a basic result from [3] regarding  degenerating family of curves and draw some easy consequences from it.
 \bigskip

 \bigskip

\begin{proposition}(See [3], Proposition 5.2) \label{degeneration}
 
 Let $l= m+r$ . Let $U_{\bf m} \subseteq S_{\bf m}$ and $U_{\bf l} \subseteq S_{\bf l}$ be non-empty open subsets. Then there exists a point $s \in S_{\bf l}$
 and a smooth curve $C$ contained in $S_{\bf l}$ passing through $s$ such that:\\
 i) $C-{s} \subset U_{\bf l}$\\
 ii) $q_{l}^{-1}(C)$ is non-singular and $q_{l}^{-1}(C) \rightarrow C$ is flat.\\
 iii) The fibre $q_{l}^{-1}(s)$ is a reduced curve with $\alpha^r$ non-singular components $C_1, \cdots ,C_{\alpha^{r}}$ intersecting 
transversally and such that atmost two of them pass through any point of $X$ and such that each $C_i$ is a fibre of $q_m$ over a point 
of $U_{\bf m}$.
 
\end{proposition}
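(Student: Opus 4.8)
\medskip
\noindent\textbf{Proof proposal.} The plan is to produce, inside $S_{\bf l}$, a one-parameter degeneration of a general smooth complete-intersection curve of type ${\bf l}$ into a reduced nodal union of $\alpha^r$ smooth complete-intersection curves of type ${\bf m}$, and then to choose this family generic enough that its total space is nonsingular. Throughout, $n=\dim X=r+1$, so that a complete intersection of $r$ general hypersurfaces is a curve.

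\noindent\emph{The special fibre.} For $i=1,\dots,r$ pick general hypersurfaces $G_{i,1},\dots,G_{i,\alpha}$ on $X$ with $G_{i,j}\in|H^{m_i}|$, say with equation $\sigma_{i,j}\in H^0(X,H^{m_i})$, and put $t_i:=\sigma_{i,1}\cdots\sigma_{i,\alpha}\in H^0(X,H^{l_i})$ (legitimate since $l_i=\alpha m_i$) and $s:=([t_1],\dots,[t_r])\in S_{\bf l}$. Then the scheme-theoretic fibre $q_l^{-1}(s)$, viewed in $X$ via $p_l$, is the reduced union $\bigcup_{\vec j}C_{\vec j}$ with $C_{\vec j}:=G_{1,j_1}\cap\cdots\cap G_{r,j_r}$ and $\vec j=(j_1,\dots,j_r)$ ranging over $\{1,\dots,\alpha\}^r$. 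I would impose on the $r\alpha$ hypersurfaces $\{G_{i,j}\}$ a finite list of open dense conditions: (a) by Bertini each $C_{\vec j}$ is a smooth irreducible curve and the $\alpha^r$ of them are pairwise distinct, so $q_l^{-1}(s)$ is reduced; (b) whenever $\vec j,\vec j'$ differ in exactly one coordinate, the $n$ general hypersurfaces cutting out $C_{\vec j}\cap C_{\vec j'}$ meet transversally, so $C_{\vec j}$ and $C_{\vec j'}$ meet in finitely many points, each an ordinary node of the union; (c) for each $\vec j$ the tuple $(G_{1,j_1},\dots,G_{r,j_r})$ lies in the prescribed open set $U_{\bf m}$. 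The decisive numerical point is that $C_{\vec j}\cap C_{\vec j'}=\varnothing$ unless $\vec j$ and $\vec j'$ differ in exactly one coordinate --- if they differ in $k$ coordinates the intersection is cut out by $n-1+k$ general hypersurfaces in the $n$-fold $X$, hence is empty for $k\ge 2$ --- and that three components through a common point would have to differ pairwise in one and the same coordinate, so their common locus would be cut out by $n+1$ general hypersurfaces and would be empty. Since the components moreover form a connected configuration (the graph on $\{1,\dots,\alpha\}^r$ joining tuples that differ in one coordinate is connected), this yields all of (iii): $q_l^{-1}(s)$ is a connected reduced curve whose $\alpha^r$ components are smooth and meet only in ordinary nodes, with at most two through any point of $X$, and by (c) each component is a fibre of $q_m$ over a point of $U_{\bf m}$.

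\noindent\emph{The pencil and the total space.} Let $p_1,\dots,p_N$ be the nodes of $q_l^{-1}(s)$. Because each $|H^{l_i}|$ is very ample and the $l_i$ are large, the nodes impose independent conditions, so for each $\nu$ the locus $D_\nu\subset S_{\bf l}$ of points $s'$ near $s$ whose associated complete intersection still carries a node near $p_\nu$ is a smooth hypersurface germ through $s$. Take $C\subset S_{\bf l}$ to be a general curve through $s$ --- for instance a general $\P^1$; its tangent line at $s$ is general, hence transverse to every $D_\nu$, so $C$ meets $\bigcup_\nu D_\nu$ only at $s$ in a neighbourhood of $s$. Consequently, in local analytic coordinates at each $p_\nu$ the family $q_l^{-1}(C)\to C$ takes the form $xy=t$, so $q_l^{-1}(C)$ is nonsingular along the special fibre; it is nonsingular off the special fibre because $q_l$ is smooth over $U_{\bf l}$ (recall $Z_{\bf l}$ is smooth). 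Since $q_l^{-1}(s)$ is connected and the general fibre is connected, $q_l^{-1}(C)$ is connected, hence --- being smooth --- irreducible, and therefore $q_l^{-1}(C)\to C$ is flat, a dominant morphism from an integral scheme to a smooth curve. It also follows that $q_l^{-1}(s')$ is a smooth complete-intersection curve of type ${\bf l}$ for $s'\in C$ near $s$ with $s'\ne s$. Finally $S_{\bf l}\setminus U_{\bf l}$ is a proper closed subset, so $C$ meets it in finitely many points; after a further general choice none of these equals $s$, and deleting them leaves a smooth quasi-projective curve $C$ through $s$ with $C\setminus\{s\}\subset U_{\bf l}$. This gives (i), and (ii) has been arranged.

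\noindent\emph{The main obstacle.} It is clause (iii) together with the nonsingularity of the total space: one has to engineer the degeneration so that the limit is genuinely nodal, with at most two branches through every point, while \emph{every one} of the $\alpha^r$ components has type ${\bf m}$ --- this is exactly the dimension count above --- and at the same time arrange that the one-parameter family sweeps out a smooth surface, which is precisely what forces the transversality of $C$ to the ``nodal'' hypersurfaces $D_\nu$ and the largeness of the $l_i$. The remaining ingredients (Bertini, the valuative and closure arguments, and flatness over a smooth curve) are routine.
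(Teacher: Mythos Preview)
The paper does not supply its own proof of this proposition; it simply quotes the statement from Mehta--Ramanathan [3, Proposition 5.2]. Your outline is in fact the construction Mehta--Ramanathan use --- degenerate each defining hypersurface of a generic curve of type ${\bf l}$ into a product of hypersurfaces of the smaller degree, then take a generic pencil through the resulting reducible configuration --- so there is nothing to contrast in approach.

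There is, however, a notational slip that breaks your construction as written. In the statement, $r$ is defined by $r=l-m$; it is \emph{not} the codimension of the curve (that is $n-1$). The convention in force (recorded at the end of \S3, though unfortunately placed after the proposition) is ${\bf m}=(\alpha_1^m,\dots,\alpha_{n-1}^m)$, ${\bf l}=(\alpha_1^l,\dots,\alpha_{n-1}^l)$, and $\alpha=\alpha_1\cdots\alpha_{n-1}$. Hence $l_i/m_i=\alpha_i^{l-m}=\alpha_i^{\,r}$, not $\alpha$; your product $t_i=\sigma_{i,1}\cdots\sigma_{i,\alpha}$ therefore has degree $\alpha\cdot\alpha_i^m\neq\alpha_i^l$ in general, and the point $s$ you construct need not lie in $S_{\bf l}$ at all. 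The repair is immediate: for each $i=1,\dots,n-1$ take $\alpha_i^{\,r}$ general hypersurfaces of degree $\alpha_i^m$ and let $t_i$ be the product of their equations. The special fibre then has $\prod_{i=1}^{n-1}\alpha_i^{\,r}=\alpha^r$ components of type ${\bf m}$, and your dimension counts go through verbatim with $n-1$ in place of your $r$ and the index set $\prod_{i}\{1,\dots,\alpha_i^{\,r}\}$ in place of $\{1,\dots,\alpha\}^r$: two components whose indices differ in $k$ slots meet along $(n-1)+k$ general hypersurfaces, hence are disjoint for $k\ge2$, and three concurrent components would force $n+1$ general hypersurfaces through a point. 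Your pencil argument for (i)--(ii) is unaffected by the correction.
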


\begin{lemma}\label{Semistability for higher types}
 If $E\mid_{Y_{\bf m}}$ is semistable, then $E\mid_{Y_{\bf l}}$ is semistable for any $l \geq m$.
 
\end{lemma}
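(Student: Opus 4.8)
The plan is to degenerate a general complete‑intersection curve of type $\bf l$ into a union of general complete‑intersection curves of type $\bf m$, and then carry semistability across the degeneration by means of Proposition~\ref{upper-semicontinuity} and the Remark following it. By transitivity it suffices to treat the case produced by Proposition~\ref{degeneration} (the statement being vacuous when $l=m$), so assume $l=m+r$.

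First I would shrink the parameter spaces. Since $E|_{Y_{\bf m}}$ is semistable and $Y_{\bf m}$ is the generic fibre of $q_m$, Lemma~\ref{Openness of Semistability}, applied to the smooth projective family obtained by restricting $q_m$ over the Bertini locus of smooth geometrically integral fibres, produces a non-empty open $U_{\bf m}\subseteq S_{\bf m}$ such that for every $t\in U_{\bf m}$ the fibre $q_m^{-1}(t)$ is a smooth complete-intersection curve on which $E$ restricts to a semistable bundle. Let $U_{\bf l}\subseteq S_{\bf l}$ be the analogous Bertini locus for $q_l$. Now apply Proposition~\ref{degeneration} to $U_{\bf m}$ and $U_{\bf l}$: we obtain a point $s$ and a smooth curve $C\subseteq S_{\bf l}$ through $s$ with $C\setminus\{s\}\subseteq U_{\bf l}$, $Y:=q_l^{-1}(C)$ non-singular, $Y\to C$ flat, and $q_l^{-1}(s)$ reduced with non-singular components $C_1,\dots,C_{\alpha^r}$ each of which is a fibre of $q_m$ over a point of $U_{\bf m}$. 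Passing to $A=\OO_{C,s}$, a discrete valuation ring with fraction field $K=k(C)$, the base change $Y\times_C\Spec A\to\Spec A$ satisfies the hypotheses of Proposition~\ref{upper-semicontinuity}: the total space and the generic curve $Y_K$ (the fibre $q_l^{-1}(\xi)$ over the generic point $\xi$ of $C$, which is a point of $U_{\bf l}$) are non-singular, and the special curve $q_l^{-1}(s)=\bigcup_i C_i$ is reduced with non-singular components. By the choice of $U_{\bf m}$, $E|_{C_i}$ is semistable for each $i$; hence the Remark following Proposition~\ref{upper-semicontinuity} shows that $E|_{Y_K}$ is semistable. Thus $E$ restricts to a semistable bundle on the complete-intersection curve of type $\bf l$ parametrised by $\xi$.

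Finally I would pass from the single point $\xi$ to the generic fibre $Y_{\bf l}$ of $q_l$. Semistability is an open condition on the base of the smooth projective family $q_l^{-1}(U_{\bf l})\to U_{\bf l}$, by the upper-semicontinuity of the $\hn$-type underlying Lemma~\ref{Openness of Semistability} (only finitely many $\hn$-types occur, and the semistable one is the smallest); the corresponding open locus is non-empty, as it contains $\xi$, hence dense in the irreducible scheme $S_{\bf l}$, hence contains its generic point. Therefore $E|_{Y_{\bf l}}$ is semistable. The step requiring the most attention is checking that the degenerating family furnished by Proposition~\ref{degeneration} genuinely meets the smoothness and reducedness hypotheses of Proposition~\ref{upper-semicontinuity} — which it does, essentially by construction — so that the Remark there may be invoked; the remainder is bookkeeping with the parameter spaces $S_{\bf m}$ and $S_{\bf l}$.
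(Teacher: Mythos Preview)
Your proof is correct and follows essentially the same route as the paper: use Lemma~\ref{Openness of Semistability} to find $U_{\bf m}$, invoke Proposition~\ref{degeneration} to produce the degenerating family over a curve $C\subseteq S_{\bf l}$, and appeal to Proposition~\ref{upper-semicontinuity} (you via the Remark following it, the paper by rerunning its argument in contrapositive form) to carry semistability from the components $C_i$ up to the generic curve of type~$\bf l$. The only cosmetic difference is that the paper argues by contradiction and is terse about the passage between the generic point of $C$ and that of $S_{\bf l}$, which you handle explicitly via openness of the semistable locus.
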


\begin{proof}
 By the openness of semistability (see lemma \ref{Openness of Semistability}), we see that since $E\mid_{Y_{\bf m}}$
 is semistable, there exists a non-empty open set $U_{\bf m}$ of $S_{\bf m}$ such that $E\mid_{q_m^{-1}(s)}$ is semistable
 for any $s\in U_{\bf m}$. By proposition \ref{degeneration}, we can degenerate $Y_{\bf l}$ into a reduced curve  such that
 every irreducible component of the special curve belongs to $U_{\bf m}$. As in the proof of proposition \ref{upper-semicontinuity}, we see that 
any $P$-reduction on $E\mid_{Y_{\bf l}}$ contradicting the semistability of $E\mid_{Y_{\bf l}}$ induces a $P$-reduction on the special curve 
whose restriction to each of the irreducible components contradicts the semistability of $E$ restricted to those components and hence by the choice of $U_{\bf m}$,
contradicts the semistability of $E\mid_{Y_{\bf m}}$. \qed
\end{proof}

\begin{lemma}\label{HN-type of restriction}
Let $X, H$ be as before. Let $E$ be a principal $G$-bundle on $X$. Let $C$ be a smooth curve on $X$. Then $E$ is semistable (resp. stable) if its restriction to $C$ is semistable (resp. stable).
 \end{lemma}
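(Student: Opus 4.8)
The plan is to prove the contrapositive: assuming $E$ is not semistable (resp. not stable) on $X$, I will produce a reduction of $E|_C$ violating its semistability (resp. stability). Here I use that, as in the applications, $C$ is a general complete intersection curve of sufficiently high degree — in particular its numerical class is a positive multiple of $H^{n-1}$ and it avoids any fixed closed subset of codimension $\geq 2$. This genericity is the only substantive ingredient; everything else is degree bookkeeping.

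So suppose $E$ is not semistable. Since it suffices to test standard maximal parabolics, there are a standard maximal parabolic $P\subset G$, a reduction $\sigma : U\to E/P$ defined on a large open $U\subseteq X$ with $\mathrm{codim}_X(X\setminus U)\geq 2$, and a dominant character $\chi : P\to k^{*}$ with $\deg_H(\chi_{*}\sigma^{*}E)>0$ (in the non-stable case one gets only $\geq 0$). First I would move the reduction onto the curve: since $X\setminus U$ has codimension at least two and a general member of each base-point-free system $|m_iH|$ contains no component of the fixed subvariety $X\setminus U$, after the $n-1$ cuts defining $C$ we get $C\cap(X\setminus U)=\emptyset$, i.e.\ $C\subseteq U$. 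Hence $\sigma_C:=\sigma|_C : C\to (E|_C)/P$ is an honest reduction of $E_C:=E|_C$ to $P$ defined on all of $C$, and by functoriality of extension of structure group its associated line bundle is $\chi_{*}\sigma_C^{*}E_C=(\chi_{*}\sigma^{*}E)|_C$.

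Then I would compare degrees. Let $L$ be the unique extension of $\chi_{*}\sigma^{*}E$ to a line bundle on $X$; then $\chi_{*}\sigma_C^{*}E_C=L|_C$, and since the class of $C$ equals $(m_1\cdots m_{n-1})H^{n-1}$ we get $\deg_C(\chi_{*}\sigma_C^{*}E_C)=L\cdot C=(m_1\cdots m_{n-1})(L\cdot H^{\,n-1})=(m_1\cdots m_{n-1})\deg_H(\chi_{*}\sigma^{*}E)$, a positive (resp. non-negative) multiple of $\deg_H(\chi_{*}\sigma^{*}E)$. Thus $\deg_C(\chi_{*}\sigma_C^{*}E_C)>0$ (resp. $\geq 0$), so $(P,\sigma_C)$ together with the dominant character $\chi$ contradicts the semistability (resp. stability) of $E|_C$; this contradiction gives the lemma. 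The step I expect to be the only delicate one is the passage from $X$ to $C$: a destabilizing reduction of $E$ exists only away from a codimension-$\geq 2$ set, so one genuinely needs $C$ to miss that set. This is automatic for a general high-degree curve but can fail for special curves — for those one would instead extend $\sigma|_{C\cap U}$ over the finitely many missing points by properness of $E/P\to X$, as in the proof of Proposition~\ref{upper-semicontinuity}, and then verify that this extension does not decrease the relevant degree.
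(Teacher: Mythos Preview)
Your argument is correct, but it is organized differently from the paper's. You split into two cases: for a \emph{general} complete intersection curve you simply arrange $C\subset U$ and restrict the destabilizing reduction $\sigma$ directly, reading off the sign of the degree from $L\cdot C=(m_1\cdots m_{n-1})\,L\cdot H^{n-1}$; for a \emph{special} $C$ you extend $\sigma|_{C\cap U}$ over the finitely many missing points by properness of $E/P\to X$ and invoke the saturation inequality (as in Proposition~\ref{upper-semicontinuity}) to see that the degree can only go up. Both steps are valid. The paper, by contrast, does not separate these cases: it takes a curve $C'\subset U$, restricts $\sigma$ there, and then places $C'$ and the given curve $C$ as the generic and special fibres of a degenerating family (Proposition~\ref{degeneration}); the machinery of Proposition~\ref{upper-semicontinuity} then transports the destabilizing reduction from $C'$ to $C$ and shows it is still destabilizing.

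What your approach buys is directness: when $C$ is general the argument is a one–line intersection computation with no degeneration needed, and even for special $C$ you stay on the fixed curve rather than building an auxiliary family. What the paper's route buys is uniformity: it treats every complete intersection curve at once via the already–established specialization lemma, so the delicate point you flag (that extending $\sigma$ across the bad points does not decrease $\deg(\chi_*\sigma_C^*E)$) is absorbed into the proof of Proposition~\ref{upper-semicontinuity} rather than revisited. Note that both proofs implicitly assume $C$ is a complete intersection curve (yours for the numerical class, the paper's so that a degenerating family with special fibre $C$ exists), even though the lemma as stated says only ``smooth curve''; this is the hypothesis actually used in the application (Proposition~\ref{Openness of Semistability in higher relative dimensions}).
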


\begin{proof}
 Suppose $E$ is not semistable (resp. stable). Let $\sigma:U \rightarrow E/P$ be a parabolic reduction of $E$ contradicting the semistability (resp. stability) of $E$, where $U$
 is a large open subset of $X$. Let $C'$ be any smooth curve contained in $U$.
 Then $\sigma$ restricts to give a $P$-reduction on $C'$ contradicting the semistability (resp. stability) of $E\mid_C$. By constructing a 
 degenerating family of curves as in proposition \ref{upper-semicontinuity} with the generic curve contained in $U$ and with the special curve $C$, we see as in proposition \ref{upper-semicontinuity}
 that this reduction induces a $P$-reduction on $C$ as well which contradicts the semistability (resp. stability) of $E\mid_C$.   \qed
\end{proof}


\begin{lemma}\label{boundedness of subschemes}
Let $X,H$ be as before. Let $(\alpha_1,\cdots,\alpha_t)$ be any $t$-tuple of integers with $1\leqq t \leqq n-1$ and each $\alpha_i\geq3$.
Let $\{X_i\}$ be a bounded family of closed subschemes of $X$. Then there exists an integer $m_\circ$ such that 
$\forall m\geqq m_\circ$, the generic complete-intersection subvariety $Y_m$ of type $(\alpha_1^m,\alpha_2^m,\cdots,\alpha_t^m)$ (having codimension $t$ in $X$) is not contained in $X_i$ for any $i$.
\end{lemma}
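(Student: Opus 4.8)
The plan is to reduce the statement to a bounded family of \emph{irreducible} subvarieties and then argue by an induction on codimension whose base case is a degree count.

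First I would set up the reduction. Since the family is bounded, by replacing each $X_i$ with the reduced structures on its irreducible components (their $H$-degrees are bounded by that of $X_i$, and subvarieties of $X$ of bounded $H$-degree again form a bounded family) and separating according to dimension, it suffices to treat a bounded family $\{Z_j\}$ of irreducible closed subvarieties of $X$ all of one fixed dimension $e$. For such a family there is a uniform bound $B$ on $\deg_H Z_j$, only finitely many Hilbert polynomials occur, and it is parametrised by a $k$-scheme $T$ of finite type, so $\dim T$ is a fixed integer. As only finitely many values of $e$ can occur it suffices to find $m_\circ$ for each, and since each $X_i$ is a proper subvariety we may assume $e\le n-1$. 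Finally, unwinding the word ``generic'', it is enough to prove that for $m\gg 0$ and general $s=(D_1,\dots,D_t)\in S_{\bf m}$ the complete intersection $Y_s=D_1\cap\cdots\cap D_t$ is contained in no $Z_j$.

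The whole argument hinges on one elementary, \emph{degree-independent} estimate: for any irreducible $W\subseteq X$ with $\dim W\ge 1$ and any $d\ge 1$, the members of $|H^d|$ containing $W$ form a proper linear subspace of codimension $\ge d+1$. (Pick $\ell_0,\ell_1\in H^\circ(X,H)$ whose ratio defines a dominant rational map $W\dashrightarrow\P^1$ --- possible since $H$ is very ample and $\dim W\ge 1$ --- and observe that the $d+1$ sections $\ell_0^{a}\ell_1^{d-a}|_W$, $0\le a\le d$, are linearly independent in $H^\circ(W,\OO_W(d))$, so $W$ imposes at least $d+1$ independent conditions on $|H^d|$.) Granting this, two cases are immediate. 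If $e<n-t=\dim Y_s$ there is nothing to prove. If $e=n-t$, then $Y_s\subseteq Z_j$ would force $Y_s=Z_j$ (both irreducible of the same dimension, $Y_s$ being irreducible by Bertini), whence $\deg_H Y_s=\deg_H Z_j\le B$; but $\deg_H Y_s=\bigl(\prod_{i=1}^{t}\alpha_i^m\bigr)(H^n)\to\infty$, so any $m$ with $\bigl(\prod_i\alpha_i^m\bigr)(H^n)>B$ works.

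For the remaining case $e\ge n-t+1$ I would induct on $e+t-n\ge 1$. Take $m$ so large that $\alpha_1^m>\dim T$; by the estimate the incidence variety $\{(j,D)\in T\times|H^{\alpha_1^m}|:Z_j\subseteq D\}$ has dimension $\le\dim T+\dim|H^{\alpha_1^m}|-(\alpha_1^m+1)<\dim|H^{\alpha_1^m}|$, so a general $D_1\in|H^{\alpha_1^m}|$ contains no $Z_j$ and is moreover smooth and connected of dimension $n-1$. Fix such a $D_1$. For general $D_2,\dots,D_t$ the complete intersection $Y_s$ lies in $D_1$ and is a generic complete intersection of type $(\alpha_2^m,\dots,\alpha_t^m)$ in $D_1$; and if $Y_s\subseteq Z_j$ then, since $Z_j\cap D_1$ is purely $(e-1)$-dimensional ($D_1$ being an effective Cartier divisor not containing $Z_j$) and $Y_s$ is irreducible, $Y_s$ lies in a single irreducible component of $Z_j\cap D_1$. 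The components of the various $Z_j\cap D_1$ form a bounded family of subvarieties of $D_1$ of dimension $e-1$ whose parameter space maps finitely to $T$, hence still has dimension $\le\dim T$. Thus the situation $(X,t,e)$ is replaced by $(D_1,t-1,e-1)$, with $e+t-n$ decreased by one, and after $e+t-n$ such steps we reach the degree case: the complete intersection in question is still the original $Y_s$, of degree $\bigl(\prod_{i=1}^{t}\alpha_i^m\bigr)(H^n)$, while the relevant member of the cut-down family, being a component of $Z_j\cap D_1\cap\cdots\cap D_{e+t-n}$, has degree at most $\bigl(\prod_{i=1}^{\,e+t-n}\alpha_i^m\bigr)B$. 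The equality forced by $Y_s\subseteq Z_j$ is then impossible once $\bigl(\prod_{i>e+t-n}\alpha_i^m\bigr)(H^n)>B$; letting $m_\circ$ dominate the finitely many thresholds that appear completes the proof. The step I expect to be the genuine obstacle --- and the reason a naive incidence-variety count does not suffice --- is exactly obtaining estimates uniform over a bounded family with a possibly positive-dimensional parameter space: cutting $Z_j$ by the high-degree hypersurface $D_1$ multiplies degrees, and hence a priori Castelnuovo--Mumford regularities, by $\alpha_1^m$, so one cannot use any bound depending on $\mathrm{reg}(Z_j)$; the degree-independent estimate sidesteps this, and since $\dim T$ stays controlled by the \emph{fixed} bound $B$ the increasing degrees $\alpha_i^m$ eventually overcome it.
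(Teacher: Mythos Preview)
Your argument is correct, but it is considerably more elaborate than necessary. The paper's proof short-circuits almost all of your work with one observation you did not make: since the family $\{X_i\}$ is bounded, there is a single integer $d$ such that every $X_i$ is contained in some hypersurface of degree at most $d$ (boundedness gives a uniform bound on Castelnuovo--Mumford regularity, hence a nonzero section of $I_{X_i}(d)$ for each $i$). Replacing each $X_i$ by such a hypersurface, one is immediately in the case $e=n-1$, and the whole problem becomes a short induction on $t$: if $Y_m\subset X_{i_\circ}$ then by the inductive hypothesis $H_1\cap\cdots\cap H_{t-1}\not\subset X_{i_\circ}$, so $Y_m$ is an irreducible component of the codimension-$t$ scheme $H_1\cap\cdots\cap H_{t-1}\cap X_{i_\circ}$, and a single B\'ezout degree comparison ($\alpha_t^m$ versus $d$) finishes.

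What you do instead is keep the $X_i$ at arbitrary dimension $e$ and run an induction on $e+t-n$, cutting both the ambient variety and the $Z_j$ by successive general hypersurfaces. This forces you to control, at each step, the locus of hypersurfaces of degree $\alpha_i^m$ containing some $Z_j$, which is why you need the ``degree-independent estimate'' that $W$ imposes at least $d+1$ conditions on $|H^d|$, together with a dimension count on the parameter space $T$ (and the verification that the parameter spaces of components of $Z_j\cap D_1$ stay of dimension $\le\dim T$). All of this is sound, but none of it is needed once one passes to containing hypersurfaces: for a hypersurface $X_{i_\circ}$ the statement ``a general $D_1$ of higher degree does not contain $X_{i_\circ}$'' is trivial, and no incidence-variety argument enters. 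Your closing worry about regularities blowing up after cutting by $D_1$ is likewise a non-issue in the paper's approach, since there one never cuts the $X_i$ at all --- only the complete intersection side is refined, and only the fixed degree bound $d$ on the hypersurfaces is ever used.
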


\begin{proof}
The proof is by induction on $t$. Without loss of generality one may assume that $X_i$'s are all hypersurfaces. Since they form a bounded family, their degree's
are bounded above by some integer $d$. Hence the lemma clearly holds for $t=1$. Let us assume that the result holds for $t-1$.
Let $Y_m=H_1\cap H_2\cap \cdots \cap H_t$ be the complete-intersection of generic hypersurfaces of degree's
$\alpha_1^m,\cdots,\alpha_t^m$ respectively. Suppose $Y_t\subset X_{i_\circ}$, for some $X_{i_\circ}\in \{X_i\}$. By the induction hypothesis,
$H_1\cap H_2\cap \cdots \cap H_{t-1}$ is not contained in $X_{i_\circ}$ and hence by irreducibility of $H_1\cap H_2\cap \cdots
 \cap H_{t-1}$, $H_1\cap H_2\cap \cdots \cap H_{t-1}\cap X_{i_\circ}$ is a closed subscheme of $H_1\cap H_2\cap \cdots \cap H_{t-1}$ 
of codimension atleast $t$ in $X$. Since it contains $Y_m$ as a closed subscheme, its codimension in $X$ is exactly $t$. Thus it follows that
$Y_m$ is an irreducible component of $H_1\cap H_2\cap \cdots \cap H_{t-1}\cap X_{i_\circ}$. But simply by comparing degree's we see that when $m$
exceeds $d$, this is not possible. This completes the proof of the lemma $\forall t$. \qed

\end{proof}
\bigskip

\noindent {\bf Notation}: 
For the remainder of the paper, we fix an ($n$-1)-tuple of integers $(\alpha_1,\cdots,\alpha_{n-1})$ with each $\alpha_i \geq 3$. Let $\alpha = \alpha_1 \cdots \alpha_{n-1}$. For
a positive integers $m$, we denote by ${\bf m}$ the sequence $(\alpha_1^m,\cdots,\alpha_{n-1}^m)$. Henceforth we will denote by $Y_{\bf m}$,
the complete-intersection curve of type $(\alpha_1^m,\cdots,\alpha_{n-1}^m)$. As remarked earlier, this is a geometrically irreducible, non-singular curve
of degree $\alpha^m$.

\section {Semistable Restriction Theorem}

We now state and prove the semistable restriction theorem for principal bundles.

 \begin{theorem}\label{Semistable Restriction Theorem}
  Let $X$ and $H$ be as before. Let $E$ be a principal $G$-bundle on $X$. Then there exists an integer $m_\circ$ such that 
  $\forall m \geq m_\circ$, $E\mid_{Y_{\bf m}}$  is again semistable.
 \end{theorem}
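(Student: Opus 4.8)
The strategy is the classical Mehta–Ramanathan argument adapted to principal bundles, proceeding by contradiction and using the degeneration argument of Proposition \ref{degeneration} together with the boundedness of the possible destabilizing data. Suppose that for infinitely many $m$ the restriction $E\mid_{Y_{\bf m}}$ is \emph{not} semistable. By Lemma \ref{Semistability for higher types}, once $E\mid_{Y_{\bf m}}$ is semistable for some $m$ it stays semistable for all larger $m$; hence the assumption is equivalent to $E\mid_{Y_{\bf m}}$ being non-semistable for \emph{all} $m$. For each such $m$, let $(P_m, \sigma_m)$ be the canonical reduction of $E\mid_{Y_{\bf m}}$ (which exists and is unique by Behrend's theorem, since $Y_{\bf m}$ is a smooth curve), with $\hn(E\mid_{Y_{\bf m}}) = \mu_m \neq$ central. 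Since there are only finitely many standard parabolics of $G$, after passing to a subsequence we may assume $P_m = P$ is a fixed standard parabolic for all $m$ in our infinite set, with $P \neq G$.

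\textbf{Step 1: Boundedness of the destabilizing reductions.} The key point is that the family of line bundles $\{\chi_*\sigma_m^*(E\mid_{Y_{\bf m}})\}$, as $\chi$ ranges over the (finitely many) fundamental weights of the non-inverted simple roots of $P$ and $m$ ranges over our set, should be bounded — equivalently, the degrees $\langle \omega_\alpha, \mu_m\rangle$ should be bounded above independently of $m$ (after suitable normalization by $\deg Y_{\bf m} = \alpha^m$, or by exhibiting the reductions themselves as living in a bounded family of Quot-scheme quotients). Concretely, one pushes the canonical reduction forward into a Grassmannian bundle via a representation $\rho: G \to GL(V)$ chosen as in the proof of Proposition \ref{upper-semicontinuity} (so that $P$ is the stabilizer of a line on which it acts by the relevant character), reducing to the statement that the destabilizing sub-line-bundles of $F(V)\mid_{Y_{\bf m}}$ have bounded degree; this follows from the corresponding boundedness statement in the Mehta–Ramanathan proof for the torsion-free sheaf $F(V)$ (or rather its reflexive extensions), since a reduction violating semistability of a principal bundle produces a subsheaf violating semistability of the associated vector bundle. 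This gives a bounded family of $P$-reductions $\{E_{P}^{(m)}\}$ on the various $Y_{\bf m}$.

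\textbf{Step 2: Descent via degeneration.} Now I would take $m$ large and degenerate $Y_{\bf l}$ (for $l$ slightly larger than $m$) into a reduced connected curve whose irreducible components are each of type ${\bf m}$ lying in a prescribed open $U_{\bf m} \subseteq S_{\bf m}$, using Proposition \ref{degeneration}. Spreading out the canonical reduction of $E\mid_{Y_{\bf l}}$ over the one-parameter degenerating family as in Proposition \ref{upper-semicontinuity} and its remark, one obtains a $P$-reduction on the special curve whose restriction to each component $C_i$ is a $P$-reduction; by Step 1 and the openness/boundedness one arranges that these component reductions are \emph{again} destabilizing of the same HN type, so each $C_i$ (a generic curve of type ${\bf m}$) carries $E$ with canonical reduction to the same $P$. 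The surjectivity of $\mathrm{Pic}(X) \to \mathrm{Pic}(Y_{\bf m})$ (Proposition \ref{picard group}) lets one interpret the line bundles $\chi_*E_{P}^{(m)}$ as restrictions of line bundles on $X$, and the Enrique–Severi lemma (Proposition \ref{Enrique-Severi lemma}, in the bounded-family form of Remark \ref{Enrique-Severi for bounded families}) lets one descend the \emph{sections} defining the reduction. Putting this together — the reduction exists on all components of a degeneration, the underlying line bundle data descends to $X$, and the relevant open conditions (genericity of the curve) are met — one concludes that the $P$-reduction actually comes from a $P$-reduction $\sigma: U \to E/P$ defined on a large open subset of $X$. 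Since on a generic complete-intersection curve through a general point the restricted reduction is destabilizing with $\deg(\chi_*\sigma^*E)\mid_{Y_{\bf m}} > 0$ for the appropriate $\chi$, and this degree equals $(\alpha^m)\cdot(\text{intersection number of a line bundle on }X)$, that intersection number is positive; hence $\deg(\chi_*\sigma^*E) > 0$ on $X$ with $\chi$ dominant, contradicting the semistability of $E$. Therefore $E\mid_{Y_{\bf m}}$ is semistable for all sufficiently large $m$, which is the assertion.

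\textbf{Main obstacle.} The crux — and the step demanding the most care — is Step 1: proving that the destabilizing $P$-reductions of $E\mid_{Y_{\bf m}}$ form a bounded family as $m$ varies (equivalently, that the normalized HN types $\mu_m / \alpha^m$ are bounded, or stabilize). For vector bundles this is exactly where Mehta–Ramanathan do real work, controlling the degrees of destabilizing subsheaves uniformly in the degree of the curve; for principal bundles one must additionally ensure that passing to the associated vector bundle $F(V)$ does not lose this control, i.e. that a canonical reduction of bounded "size" for $E$ corresponds to a subsheaf of bounded slope-excess for $F(V)$, uniformly over the infinitely many curves. Once boundedness is in hand, Step 2 is a fairly mechanical assembly of the degeneration argument (Propositions \ref{degeneration}, \ref{upper-semicontinuity}), the Picard comparison (Proposition \ref{picard group}), and the Enrique–Severi descent of sections (Proposition \ref{Enrique-Severi lemma} and Remark \ref{Enrique-Severi for bounded families}), exactly paralleling the sheaf case.
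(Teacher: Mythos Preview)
Your overall strategy matches the paper's, but you have misallocated the roles of the key tools, and this produces a real gap.

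\textbf{Boundedness (your Step 1) is easier than you think, and it is where the degeneration is actually used.} You propose to reduce boundedness to the vector-bundle Mehta--Ramanathan result for $F(V)$; the paper does not do this, and your reduction is not clean (the canonical reduction of $E\mid_{Y_{\bf m}}$ need not induce the HN filtration of $F(V)\mid_{Y_{\bf m}}$, so controlling one by the other takes extra work you have not supplied). Instead the paper argues directly: for each $m$ in the chosen subsequence, lift the line bundle $\chi_*E_{m_P}$ on $Y_{\bf m}$ to a line bundle $L_m$ on $X$ via Proposition \ref{picard group}, and set $d_m = \deg L_m$. Now the degeneration (Proposition \ref{degeneration}) together with Proposition \ref{upper-semicontinuity} shows $d_l \le d_m$ whenever $l > m$: degenerate $Y_{\bf l}$ into $\alpha^{l-m}$ components of type ${\bf m}$ lying in $U_{\bf m}$ and apply the degree inequality. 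Since $d_m \ge 0$ by the definition of canonical reduction, the $d_{m_k}$ are a decreasing sequence of non-negative integers, so the $L_{m_k}$ form a bounded family. That is the whole of Step 1 --- and the degeneration is used here, not in Step 2.

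\textbf{Descent (your Step 2) does not use degeneration; the missing ingredient is Lemma \ref{boundedness of subschemes}.} Once the $L_m$ are bounded, Enrique--Severi (Remark \ref{Enrique-Severi for bounded families}) lifts the section $\varphi_m \in \HOM(L_m\mid_{Y_{\bf m}}, F(V)\mid_{Y_{\bf m}})$ to $\tilde\varphi_m \in \HOM(L_m, F(V))$ on all of $X$, giving a section of $F/Q$ on the large open set $V_m$ where $\tilde\varphi_m \ne 0$. But you still need this section to land in $E/P \subset F/Q$, and your sketch does not address this at all. The paper handles it via Lemma \ref{boundedness of subschemes}: the schematic preimages of the cone over $E/P$ under all global sections of all $L_{m_k}^* \otimes F(V)$ form a bounded family of closed subschemes of $X$, so for $m$ large enough any such subscheme containing $Y_{\bf m}$ must equal $X$. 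Hence $\tilde\varphi_m$ factors through $E/P$ on $V_m$, yielding the destabilizing $P$-reduction of $E$ on $X$ and the contradiction. The mechanism you describe (``the reduction exists on all components of a degeneration, the underlying line bundle data descends'') does not by itself produce a section of $E/P$ over $X$.
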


 \begin{proof}
 The proof of the theorem is by contradiction. For any non-negative integer $m$, let $E_m$ denote the restriction of $E$ to $Y_{\bf m}$.
 As remarked earlier in lemma \ref{Semistability for higher types}, if $E_m$ is semistable for some $m$, then it is semistable $\forall l\geqq m$.
 So assume that the restriction $E_m$ is non-semistable for all $m$. Since there are only finitely many standard parabolics, we can find a
 sequence of integers $\{m_k\}$ such that the standard parabolic underlying the canonical reduction of 
 $E\mid_{Y_{m_k}}$ is the same $\forall k$, say $P$. By proposition \ref{Openness of Semistability}, for each $m\in \{m_k\}$,
 we can find a non-empty open subset $U_{\bf m}\subseteq S_{\bf m}$ such that 
 the $HN\mid_{q_m^{-1}(s)}$ is constant for all $s \in U_{\bf m}$. For any $m\in \{m_k\}$, let $E_{m_P}$ denote the principal $P$-bundle underlying the canonical reduction 
 of $E_m$. Fix a fundamental weight $\chi$ of $P$. Let $\chi_*E_{m_P}$ denote the line bundle obtained by extension of structure group
 of $E_{m_P}$ by $\chi$. Let $L_m$ denote the extension of $\chi_*E_{m_P}$ to all of $X$ (see proposition \ref{picard group}). Let $d_m= \text{deg } L_m$.
 Note that deg $L_m = \alpha^m\cdot\text{deg }\chi_*E_{m_P}$. \\
\indent Claim : The sequence of integers $d_{m_k}$ is a decreasing function of $k$.\\
Proof of claim :  Let $l = m + r$ with $m,l \in \{m_k\}$ and $r>0$. As before,
degenerate $Y_l$ into a reduced curve with $\alpha^r$ non-singular, irreducible components $C_1, \cdots, C_{\alpha^r}$, with $C_i = q_m^{-1}(s_i)$ 
for some $s_i \in U_{\bf m}$, intersecting transversally such that atmost two components intersect at any
given point. Let $E^i_P$ denote the canonical reduction of $E\mid_{C_i}$.
By proposition \ref{upper-semicontinuity}, we see that deg $(\chi_*E_{l_P}) \leq \underset{i=1}{\overset{\alpha^r}\sum} \text{deg } (\chi_*E^i_P) = \alpha^r\text{deg } (\chi_*E_{m_P})$, where the last equality is because of the choice of $U_{\bf m}$.
This immediately implies that $d_l \leq d_m$. Thus we see that $d_{m_k}$ is decreasing function of $k$ thereby completing the proof of the claim.\\
\indent Since by the definition of canonical reduction $d_{m_k}\geqq 0$, we conclude that the set of line bundles $L_{m_k}$ form a bounded family.\\  
\indent Consider the closed embedding $i: E/P \hookrightarrow F/Q$ described before in lemma \ref{upper-semicontinuity}, where $F$ is a principal $GL(V)$-bundle and $Q$ is a maximal parabolic in $GL(V)$
stabilizing a one-dimensional subspace on which $P$ acts by the character $\chi$. Thus we have for each $m$, $F/Q=\P(F(V))=\P(L_m^*\otimes F(V))$ .
Since the line bundles $L_{m_k}$ form a bounded family, by remark \ref{Enrique-Severi for bounded families}, 
there exists an integer $m_1$ such that for a curve of type ${\bf m}$ in $\{m_k\}$ with $m \geqq m_1$, the restriction map Hom $(L_m, F(V)) \rightarrow $ Hom $(L_m\mid_{Y_{\bf m}}\rightarrow F(V)\mid_{Y_{\bf m}})$
is a bijection. For each $m\in\{m_k\}$, consider the schematic inverse images of the cone over $E/P$ inside $L_m^*\otimes F(V)$
defined by the embedding $i$ via all the global sections of $L_{m_k}^*\otimes F(V)$. Since $L_{m_k}$'s form a bounded family, we see that this collection of closed subschemes of $X$ for all sections and for all possible $k$
form a bounded family and hence by lemma \ref{boundedness of subschemes}, it follows that there exists an integer $m_\circ \geqq m_1$ such that this collection cannot contain any
curve of type $\geqq m_\circ$ unless that subscheme containing this curve equals all of $X$. \\ 
\indent Let $m \geqq m_\circ$ be any integer in $\{m_k\}$ and consider the 
the section $\varphi_m: Y_{\bf m} \rightarrow E/P$ corresponding to the canonical reduction of $E_m$. Via the embedding $i$, this may be viewed as
a section $Y_{\bf m} \rightarrow F/Q$ and hence corresponds to a line sub-bundle of $F(V)\mid_{Y_{\bf m}}$ which can be easily seen to be isomorphic to $L_m\mid_{Y_{\bf m}}$ and hence we get a section, say
$\varphi_m$ of Hom $(L_m\mid_{Y_{\bf m}}, F(V)\mid_{Y_{\bf m}})$. Lift this 
to a section $\tilde{\varphi_m}$ of Hom $(L_m, F(V))$. Let $V_m$ be the open set where this lifted section is non-zero. On $V_m$, we get a line sub-bundle of
$F(V)$ and hence a section $\tilde{\varphi_m}: V_m \rightarrow F/Q$ which extends the section $\varphi_m$ on $Y_{\bf m}$. Note that since $V_m$ contains a curve which is a complete-intersection of ample hypersurfaces,
namely $Y_{\bf m}$, it follows that $V_m$ is a large open set. Let $X_m$ denote the scheme-theoretic inverse image of the cone over $E/P$ inside $L_m^* \otimes F(V)$ via $\tilde{\varphi_m}$. It is easy to see that $X_m$ is a closed
subscheme of $X$ containing $Y_{\bf m}$ and hence by the choice of $m$, equals all of $X$. This shows that on $V_m$, the section  $\tilde{\varphi_m}$ actually factors through a section
$V_m \rightarrow E/P$ (via the embedding $i$) lifting the canonical reduction on $Y_{\bf m}$. This section has the property that the line bundle obtained by extension of structure group
via $\chi$ has positive degree thereby contradicting the semistability of $E$. This contradiction shows that $E\mid_{Y_{\bf m}}$ is semistable
thereby completing the proof of the theorem. \qed

\end{proof}

\bigskip 

In fact in the above proof it can be shown that the line bundles $L_m$ with $m\gg 0$ are all isomorphic. Although, we do not need this fact for the proof of the above theorem
or for the rest of the paper, it is an interesting fact in itself.\\
\indent Choose, as we may by the proof of lemma above, an integer $s$ so that $\forall m\geqq s$, $d_m$ is constant. Choose any $l,m \geqq s$.
As in the above proof, consider a degenerating family $D\rightarrow S$, where $S$ is a dvr and the generic fiber is $Y_l$ and the special
fiber has $\alpha^{l-m}$ irreducible, non-singular components in $U_{\bf m}$ (in the notation of the proof). By the remark following lemma \ref{upper-semicontinuity}, it follows that the canonical 
reduction $E_{P_l}$ spreads to a $P$-reduction $\tilde{E_{P_l}}$ on all of $D$ and whose restriction to every irreducible component of the special fiber 
coincides with the canonical reduction there. Thus $L_l\mid_D$ and $\chi_*\tilde{E_{P_l}}$ are two line bundles on $D$ which are 
isomorphic restricted to the generic fiber and have the same degree restricted to every irreducible component of the special fiber.
Hence they are isomorphic on all of $D$. This implies that $L_l$ is isomorphic to $L_m$ on all the components of $D_k$ and hence by proposition
\ref{picard group}, they are isomorphic on $X$. 

\begin{proposition}({\bf Openness of Semistability in higher relative dimensions}) \label{Openness of Semistability in higher relative dimensions}

 Let $\pi:Z \rightarrow S$ be a smooth, projective morphism, where $S$ is a finite-type $k$-scheme. Let $\OO_{Z/S}(1)$ be a relatively very ample line bundle on $Z$. Let $E$ be a principal $G$-bundle on $Z$.
Let $Z_{\eta}$ denote the generic fiber of $\pi$. Suppose $E\mid_{Z_{\eta}}$ is semistable w.r.t. $\OO_{Z_\eta}(1)$.
Then there exists a non-empty open subset $U\subseteq S$ such that $\forall s\in U$, $E\mid_{Z_s}$ is semistable w.r.t $\OO_{Z_s}(1)$.
\end{proposition}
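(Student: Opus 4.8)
\noindent The plan is to reduce, by means of the semistable restriction theorem, from the variety $Z_s$ to a smooth curve inside it, so that the already–known openness of semistability in relative dimension one (Lemma \ref{Openness of Semistability}) can be applied to a suitable one–dimensional family of such curves. First I would make harmless reductions: replace $S$ by a dense open affine integral subscheme so that $\pi$ has constant relative dimension, say $n$; if $n\le 1$ the assertion follows directly from Lemma \ref{Openness of Semistability} applied to $\pi$ itself (case $n=1$) or is trivial ($n=0$), so assume $n\ge 2$. Fix once and for all an $(n-1)$-tuple $(\alpha_1,\dots,\alpha_{n-1})$ with each $\alpha_i\ge 3$ and write ${\bf m}=(\alpha_1^m,\dots,\alpha_{n-1}^m)$. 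Shrinking $S$ further, I may assume that each $\pi_*\OO_{Z/S}(\alpha_i^m)$ is locally free and compatible with base change, so that the relative parameter space $\mathcal S_{\bf m}:=\P_S(\pi_*\OO_{Z/S}(\alpha_1^m))\times_S\cdots\times_S\P_S(\pi_*\OO_{Z/S}(\alpha_{n-1}^m))$ and the relative incidence variety $\mathcal Z_{\bf m}\subseteq Z\times_S\mathcal S_{\bf m}$, with its projection $q\colon\mathcal Z_{\bf m}\to\mathcal S_{\bf m}$, restrict over each point $s\in S$ to exactly the Mehta--Ramanathan data $S_{\bf m}$, $Z_{\bf m}$, $q_m$ for the fibre $Z_s$ with polarisation $\OO_{Z_s}(1)$. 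Letting $\mathcal S^{\circ}_{\bf m}\subseteq\mathcal S_{\bf m}$ be the open locus over which $q$ is smooth of relative dimension one, Bertini's theorem applied over the infinite field $k(\eta)$ shows that $\mathcal S^{\circ}_{\bf m}$ meets the generic fibre of $\mathcal S_{\bf m}\to S$; passing to an irreducible component of $\mathcal S^{\circ}_{\bf m}$ dominating $S$ (with reduced structure) and restricting $\mathcal Z^{\circ}_{\bf m}$ over it, I obtain a smooth projective morphism $q\colon\mathcal Z^{\circ}_{\bf m}\to\mathcal S^{\circ}_{\bf m}$ of relative dimension one whose fibres are smooth complete-intersection curves in the respective $Z_s$.

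The heart of the argument is to control the generic fibre of $q$. Let $\xi$ be the generic point of $\mathcal S^{\circ}_{\bf m}$; it lies over $\eta$, and by construction $q^{-1}(\xi)$ is the generic complete-intersection curve of type ${\bf m}$ inside $Z_\eta$. Applying Theorem \ref{Semistable Restriction Theorem} to the principal $G$-bundle $E|_{Z_\eta}$ on the smooth projective variety $Z_\eta$ — after passing to $\overline{k(\eta)}$ and using that semistability of a $G$-bundle is insensitive to extension of the base field, equivalently that the canonical reduction descends — there is an integer $m_\circ$ such that for all $m\ge m_\circ$ the restriction of $E|_{Z_\eta}$ to the generic complete-intersection curve of type ${\bf m}$ is semistable. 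Fix such an $m$; then $E|_{q^{-1}(\xi)}$ is semistable, and Lemma \ref{Openness of Semistability} applied to the family $q\colon\mathcal Z^{\circ}_{\bf m}\to\mathcal S^{\circ}_{\bf m}$ produces a nonempty open subset $\mathcal W\subseteq\mathcal S^{\circ}_{\bf m}$ such that $E|_{q^{-1}(w)}$ is semistable for every $w\in\mathcal W$.

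It remains to descend to $S$. Since $\mathcal W$ is a nonempty open of the irreducible scheme $\mathcal S^{\circ}_{\bf m}$, which dominates $S$, its image in $S$ is constructible and dense, hence by Chevalley's theorem contains a nonempty open $U\subseteq S$. For any $s\in U$ choose $w\in\mathcal W$ lying over $s$; then $q^{-1}(w)$ is a smooth curve contained in $Z_s$ and $E|_{q^{-1}(w)}$ is semistable, so by Lemma \ref{HN-type of restriction} the restriction $E|_{Z_s}$ is semistable with respect to $\OO_{Z_s}(1)$. Thus $U$ has the required property.

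I expect the main obstacle to be the construction in the first two paragraphs: setting up the relative parameter space so that its generic fibre specialises correctly to the absolute Mehta--Ramanathan situation on $Z_\eta$, and in particular dealing with the fact that the generic fibre of $\pi$ lives over the non-algebraically-closed field $k(\eta)$, which forces a small base-change-and-descent step (or an appeal to the fact that the proof of Theorem \ref{Semistable Restriction Theorem} works verbatim over any infinite field) before the restriction theorem can be invoked. Everything else — the reductions, Bertini, Lemma \ref{Openness of Semistability}, Chevalley's theorem, and Lemma \ref{HN-type of restriction} — is routine.
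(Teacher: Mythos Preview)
Your strategy is exactly the paper's: invoke Theorem \ref{Semistable Restriction Theorem} on $Z_\eta$ to pass to a curve, apply Lemma \ref{Openness of Semistability} in relative dimension one, and conclude with Lemma \ref{HN-type of restriction}. The only difference is implementation: the paper simply asserts that one may choose a closed subscheme $C\hookrightarrow Z$ with $C\to S$ smooth projective of relative dimension one whose generic fibre $C_\eta\subset Z_\eta$ is a general complete-intersection curve of high enough type, and then applies Lemma \ref{Openness of Semistability} directly to $C\to S$, whereas you build the entire relative parameter space $\mathcal S_{\bf m}\to S$, apply Lemma \ref{Openness of Semistability} to $q\colon\mathcal Z^{\circ}_{\bf m}\to\mathcal S^{\circ}_{\bf m}$, and then descend to $S$ via Chevalley. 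Your version is more explicit about where the relative curve comes from and sidesteps the need to choose a section of the parameter space (or to spread out $C_\eta$), at the cost of a slightly longer write-up; the paper's version is terser but leaves the existence of $C$ and the $k(\eta)$-versus-$\overline{k(\eta)}$ issue you flag as exercises for the reader.
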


\begin{proof}
Choose a closed subscheme $C \hookrightarrow Z$ such that the restricted morphism $\pi': C\rightarrow S$ is a smooth, projective morphism of relative dimension 1
and such that the generic curve $C_{\eta}\hookrightarrow Z_{\eta}$ is a complete-intersection of general ample hypersurfaces in $Z_\eta$
of sufficiently high enough degree's so that by theorem \ref{Semistable Restriction Theorem}, the restriction of $E$ to $C_{\eta}$ is again semistable.
Hence by lemma \ref{Openness of Semistability}, there exists a non-empty open subset $U \subseteq S$ such that $\forall s \in U$, 
$E\mid_{\pi'^{-1}(s)}$ is again semistable. Then by lemma \ref{HN-type of restriction}, it follows that $E\mid_{Z_s}$ is also semistable $\forall s \in U$.\qed
\end{proof}

 \section{Stable Restriction Theorem}
 
 The aim of this section is to prove the stable restriction theorem for principal bundles. 
 The stable restriction theorem for torsion-free sheaves was proved in [4] and is as follows:
 
  \begin{theorem}([4])
  With notation as in theorem \ref{Semistable Restriction Theorem}, let $E$ be a stable torsion-free coherent sheaf on $X$ w.r.t. $H$. Then there exists
  an integer $m_\circ$ such that for any $m \geqq m_\circ$, the restriction of $E$ to $Y_{\bf m}$ is again stable.
  \end{theorem}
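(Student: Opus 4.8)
The plan is to adapt, essentially verbatim, the strategy of the semistable restriction theorem (Theorem \ref{Semistable Restriction Theorem}), exploiting the fact that for a \emph{stable} sheaf a would-be destabilizing subsheaf on a curve must have slope \emph{exactly} equal to that of $E\mid_{Y_{\bf m}}$, so that its determinant has a pinned degree and no degeneration argument is needed. Since $E$ is torsion-free on the smooth variety $X$, it is locally free on a large open set $X^\circ$ (complement of codimension $\geq 2$), and a general high-type curve $Y_{\bf m}$ lies in $X^\circ$, so $E\mid_{Y_{\bf m}}$ is a vector bundle on a smooth curve. First I would invoke the semistable restriction theorem for torsion-free sheaves ([3]) to fix $m$ large enough that $E\mid_{Y_{\bf m}}$ is semistable; thereafter the only possible failure is strict stability.

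Suppose, for contradiction, that $E\mid_{Y_{\bf m}}$ fails to be stable for arbitrarily large $m$. For each such $m$ I would choose a proper nonzero saturated subsheaf $F_m \subset E\mid_{Y_{\bf m}}$ with $\mu(F_m) \geq \mu(E\mid_{Y_{\bf m}})$; since $E\mid_{Y_{\bf m}}$ is semistable this forces $\mu(F_m) = \mu(E\mid_{Y_{\bf m}})$, and being saturated on a smooth curve $F_m$ is a subbundle. Passing to a subsequence, I may assume $\text{rk } F_m = r'$ is a fixed integer with $1 \leq r' \leq \text{rk } E - 1$. The key numerical observation is that $\mu(E\mid_{Y_{\bf m}}) = \alpha^m \mu_H(E)$, whence $\deg_{Y_{\bf m}}(\det F_m) = r'\alpha^m \mu_H(E)$; so if $L_m \in \text{Pic}(X)$ denotes the extension of $\det F_m$ under the bijection of Proposition \ref{picard group}, its $H$-degree $c_1(L_m)\cdot H^{n-1} = r'\mu_H(E)$ is a fixed rational number independent of $m$. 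Therefore the line bundles $\{L_m\}$, and with them the (reflexive) sheaves $\{L_m^\vee \otimes \wedge^{r'} E\}$, form a bounded family.

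Now I would run the rank-one reduction and lifting argument exactly as in Theorem \ref{Semistable Restriction Theorem}, with the Pl\"ucker embedding of the Grassmannian $\gr(r', \text{rk } E)$ into a projective space playing the role of the embedding $G/P \hra GL(V)/Q$. The subbundle $F_m$ corresponds to a section of the relative Grassmann bundle $\gr(r', E\mid_{Y_{\bf m}})$, equivalently to a line subbundle $\det F_m = L_m\mid_{Y_{\bf m}} \hra \wedge^{r'}(E\mid_{Y_{\bf m}})$, i.e. to an element $\varphi_m \in \HOM(L_m\mid_{Y_{\bf m}}, \wedge^{r'}E\mid_{Y_{\bf m}})$. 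Since $\{L_m^\vee \otimes \wedge^{r'}E\}$ is bounded, Remark \ref{Enrique-Severi for bounded families} gives, for $m$ large, a unique lift $\tilde\varphi_m \in \HOM(L_m, \wedge^{r'}E)$ restricting to $\varphi_m$ on $Y_{\bf m}$ (working on $X^\circ$ and extending sections across the codimension-$\geq 2$ complement). Let $X_m \subseteq X$ be the scheme-theoretic locus over which $\tilde\varphi_m$ lands in the cone over the Grassmannian of decomposable $r'$-vectors; it is closed and contains $Y_{\bf m}$. As the collection of all such loci is again a bounded family of subschemes, Lemma \ref{boundedness of subschemes} forces $X_m = X$ for all sufficiently large $m$. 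Hence on a large open set $\tilde\varphi_m$ defines a rank-$r'$ subbundle of $E$, which saturates to a coherent subsheaf $\tilde F \subset E$ on $X$ with $\det \tilde F = L_m$ (the two line bundles agree on the large open set, hence on $X$), so $\mu_H(\tilde F) = (r'\mu_H(E))/r' = \mu_H(E)$. A proper subsheaf of $E$ of slope equal to $\mu_H(E)$ contradicts the stability of $E$, completing the argument.

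The step I expect to be the main obstacle is the last one: verifying that the lifted homomorphism $\tilde\varphi_m$ really lands in the Grassmannian cone at every point of a large open set — that is, that $X_m = X$ — and that the resulting subbundle genuinely saturates to a coherent subsheaf of $E$ on all of $X$ of the asserted slope. Making this precise requires the boundedness of the family of loci $X_m$ (so that Lemma \ref{boundedness of subschemes} applies uniformly) together with the codimension-two extension of the subsheaf, paralleling the corresponding passage in Theorem \ref{Semistable Restriction Theorem}. Relative to [4] the simplification is that the equal-slope constraint pins $\deg_H L_m$ outright, so boundedness of $\{L_m\}$ is immediate and the delicate degeneration and monotonicity bookkeeping of [4] can be dispensed with.
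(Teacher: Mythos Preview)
The paper does not prove this theorem: it is stated as a citation from [4] (Mehta--Ramanathan) and no proof is given in the text. There is therefore no ``paper's own proof'' to compare against for this particular statement.

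That said, your proposal is precisely the translation to the sheaf setting of the argument the paper \emph{does} give for the principal-bundle analog, Theorem~\ref{Stable Restriction Theorem}. In both cases one first invokes the semistable restriction theorem so that a would-be destabilizing subobject on $Y_{\bf m}$ must have slope exactly equal to that of $E\mid_{Y_{\bf m}}$; this pins the $H$-degree of the associated line bundle $L_m$ outright, so boundedness of $\{L_m\}$ is immediate and no degeneration/monotonicity argument is needed. Your use of the Pl\"ucker embedding of $\gr(r',\mathrm{rk}\,E)$ is exactly the $GL$-case of the paper's embedding $E/P\hookrightarrow F/Q$, and the lifting via Remark~\ref{Enrique-Severi for bounded families} together with the cone-locus argument and Lemma~\ref{boundedness of subschemes} is identical in structure to the end of the proofs of Theorems~\ref{Semistable Restriction Theorem} and~\ref{Stable Restriction Theorem}. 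The paper itself notes (in the introduction) that this route is ``different and substantially simpler'' than the original proof in [4], so your proposal is both correct and aligned with the paper's method---just applied to a statement the paper chose to quote rather than reprove.
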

  
 Once again, as in the case of the semistable restriction theorem, by openness of stability, it follows that there exists a non-empty open subset $U_{\bf m} \subseteq S_{\bf m}$ such that for any $s\in U_{\bf m}$, $E\mid_{q_m^{-1}(s)}$ is again stable.\\

  We begin by proving the openness of stability for a principal $G$-bundle defined over a smooth family of curves.

 \begin{lemma}({\bf Openness of Stability for a family of curves}) \label{Openness of Stability}
 Let $S$ be a scheme of finite type over $k$ and let $f: Z \rightarrow S$ be a smooth, projective morphism of relative dimension one. 
 Let $E$ be a principal $G$-bundle on $Z$. Let $\eta$ denote the generic point of $S$ and $Z_{\eta}$ denote the generic fibre of $f$. Suppose 
 $E\mid_{Z_\eta}$ is stable. Then there exists a non-empty open subset $U\subseteq S$ such that $\forall s \in U$, $E\mid_{Z_s}$ is stable.
\end{lemma}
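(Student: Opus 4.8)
The plan is to mimic the structure of the openness of semistability (Lemma~\ref{Openness of Semistability}), adding one boundedness step that exploits the fact that, once semistability is known, the failure of stability is detected by a \emph{single} numerical condition. First I would apply Lemma~\ref{Openness of Semistability} to $E|_{Z_\eta}$ (which is semistable since it is stable): after shrinking $S$ to a non-empty open subset we may assume that $E|_{Z_s}$ is semistable for \emph{every} $s\in S$, the scheme $S$ remaining irreducible with generic point $\eta$ and $E|_{Z_\eta}$ still stable. It then suffices to produce a non-empty open $U\subseteq S$ over which no fibre admits a reduction to a standard maximal parabolic on which the corresponding dominant character has degree zero; by the remark in Section~2 that semistability and stability may be tested on reductions to standard maximal parabolics, together with the semistability of every fibre, such a $U$ consists of stable fibres.

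Fix a standard maximal parabolic $P\subseteq G$ and let $\chi$ be the dominant character generating $X^*(P)$ modulo $X^*(G)$. Since $E/P\to Z$ is projective ($G/P$ being projective) and $Z\to S$ is projective, the functor sending an $S$-scheme $T$ to the set of sections of $(E/P)_T\to Z_T$ is represented by a countable disjoint union of finite-type $S$-schemes, indexed by the Hilbert polynomial of the section relative to a fixed relatively ample line bundle on $E/P$ over $S$. Because $P$ is maximal, $X^*(P)/X^*(G)$ has rank one, so the integer $\deg(\chi_*\sigma^*E)$ together with the fixed topological type of $E$ determines this Hilbert polynomial up to finitely many possibilities; hence the reductions $\sigma$ with $\deg(\chi_*\sigma^*E)=0$ are parametrised by a finite-type $S$-scheme $R_P$, with structure morphism $\pi_P\colon R_P\to S$. (Alternatively, as in the proof of Proposition~\ref{upper-semicontinuity}, extend the structure group along a representation $\rho\colon G\to GL(V)$ carrying a line with scheme-theoretic stabiliser $P$ on which $P$ acts by $\chi$; such reductions of $E|_{Z_s}$ then correspond to degree-zero line subbundles of the associated vector bundle $E(V)|_{Z_s}$, and the relevant relative Quot scheme is projective over $S$ once the Hilbert polynomial of the quotient is fixed.)

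Now $\pi_P(R_P)\subseteq S$ is constructible by Chevalley's theorem, and since $E|_{Z_\eta}$ is stable the generic fibre $(R_P)_\eta$ is empty --- a point of it would, over a finite extension of $k(\eta)$, yield a reduction of $E|_{Z_\eta}$ to $P$ with $\deg(\chi_*\cdot)=0$, contradicting stability --- so $\eta\notin\pi_P(R_P)$. As $S$ is irreducible, $\overline{\pi_P(R_P)}$ is a proper closed subset of $S$. Running over the finitely many standard maximal parabolics and setting $U:=S\setminus\bigcup_P\overline{\pi_P(R_P)}$ gives a non-empty open subset over which every fibre of $E$ is semistable and admits no reduction to a standard maximal parabolic of vanishing $\chi$-degree, hence is stable. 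The step I expect to need the most care is the boundedness assertion in the second paragraph; it is, however, genuinely easier than the analogous step in the semistable restriction theorem, since here only the single value $\deg(\chi_*\sigma^*E)=0$ must be controlled rather than an a priori unbounded range of destabilizing degrees --- which is presumably the source of the simpler argument promised in the introduction.
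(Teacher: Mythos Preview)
Your proposal is correct and follows essentially the same approach as the paper: reduce to the semistable locus via Lemma~\ref{Openness of Semistability}, then for each standard maximal parabolic $P$ use the Hilbert/section scheme of $E/P\to Z\to S$ with fixed Hilbert polynomial (determined by the degree-zero condition) to cut out the non-stable locus, which misses $\eta$ and is constructible, whence its closure is a proper closed subset. The only cosmetic differences are that the paper invokes properness of the Hilbert scheme to get a closed (hence, for the open subscheme of sections, locally closed) image while you use Chevalley's theorem, and the paper writes the Hilbert polynomial explicitly as $n+1-g$ via Riemann--Roch rather than arguing abstractly that $\deg(\chi_*\sigma^*E)=0$ pins it down.
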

 
 \begin{proof}
 Since the family of curves is flat, the genus is 
 constant in the family, say $g$.
 By openness of semistability (see lemma \ref{Openness of Semistability}), we know that there exists a neighbourhood $V$ of $\eta$ such that 
 for any $s\in V$, the restriction $E_s$ is again semistable. Hence the open subset of $U$ parametrizing stable bundles is the set of
 points $s\in V$ for which $E_s$ admits a reduction to some maximal parabolic $P$ such that the line bundle obtained by 
 extension of structure group via the unique fundamental weight of $P$ has degree zero. By Riemann-Roch, the Hilbert polynomial of $Z_s$
 for any $s\in S$ w.r.t. such a line bundle is $n+1-g$. \\
 \indent Fix a maximal parabolic $P\subseteq G$. Let $\mathcal L$ denote the line bundle on $E/P$ corresponding to the fundamental weight of $P$.
 Consider the Hilbert scheme $\text{Hilb}^{n+1-g,\mathcal L}_{E/P/Z/S}$ which represents the functor from $S$-schemes to sets, associating to any $S$-scheme $T$,
 the set of all closed subschemes of $E_T/P$ which are flat over $T$ and whose restriction to every schematic-fibre has Hilbert polynomial $n+1-g$.   
 There exists an open subset ${\text{Hilb}^\circ}^{n+1-g, \mathcal L}_{E/P/Z/S}\subseteq \text{Hilb}^{n+1-g,\mathcal L}_{E/P/Z/S}$ which 
 parametrizes those subschemes of $E/P$ which are sections with this property. By properness of $\text{Hilb}^{n+1-g, \mathcal L}_{E/P/Z/S}$ over $S$,
 its image is a closed subspace of $S$. Since $E\mid_{Z_s}$ is stable, it follows that the image of ${\text{Hilb}^\circ}^{n+1-g, \mathcal L}_{E/P/Z/S}$ is a locally closed subset
 of $S$ which misses the generic point.
 The union of all these locally closed subschemes for all standard maximal parabolics is a locally closed subset in $S$
 whose complement in $V$ contains an open set $U$ parametrizing stable bundles. \qed
\end{proof} 
 
\begin{lemma}
With notation as above, if there exists some $m$ such that $E_{m}$ is stable, then $\forall l\geqq m$, $E_l$ is also stable.
\end{lemma}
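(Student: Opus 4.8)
The plan is to follow the template of Lemma~\ref{Semistability for higher types}, replacing the degeneration inequality for semistable bundles by its consequence for \emph{stable} ones on each component of the degenerate curve. The case $l=m$ being trivial, fix $l>m$. First I would dispose of the non-semistable case: since $E_m$ is stable it is a fortiori semistable, so by Lemma~\ref{Semistability for higher types} the restriction $E_l=E\mid_{Y_{\bf l}}$ is semistable. Hence if $E_l$ were not stable it would be semistable but not stable, and so there would exist a standard maximal parabolic $P\subseteq G$, a reduction $\tau\colon Y_{\bf l}\to E_l/P$ (defined on all of the curve $Y_{\bf l}$), and the associated fundamental weight $\chi$ of $P$ with $\deg(\chi_*\tau^*E_l)\geqq 0$; combining this with the semistability of $E_l$ forces $\deg(\chi_*\tau^*E_l)=0$. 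The aim is to contradict the existence of such a $(\tau,P,\chi)$.

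Next I would bring in openness of stability. Applying Lemma~\ref{Openness of Stability} to the family $Z_{\bf m}\to S_{\bf m}$ over the open locus of smooth curves (with the bundle pulled back from $X$), the stability of $E_m=E\mid_{Y_{\bf m}}$ yields a non-empty open $U_{\bf m}\subseteq S_{\bf m}$ with $E\mid_{q_m^{-1}(s)}$ stable for every $s\in U_{\bf m}$. The reduction $\tau$, being defined over the generic point of $S_{\bf l}$, spreads to a relative $P$-reduction over $q_l^{-1}(U_{\bf l})$ for some non-empty open $U_{\bf l}\subseteq S_{\bf l}$, which after shrinking we may take to lie in the locus of smooth curves. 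Writing $l=m+r$ with $r>0$ and invoking Proposition~\ref{degeneration} with these $U_{\bf m}$ and $U_{\bf l}$, I obtain a smooth curve $C\subseteq S_{\bf l}$ and a point $s\in C$ such that $q_l^{-1}(C)$ is non-singular, $q_l^{-1}(C)\to C$ is flat, its generic fibre lies over $U_{\bf l}$ (hence carries the reduction $\tau$), and its special fibre $q_l^{-1}(s)$ is a reduced curve with $\alpha^r$ non-singular components $C_1,\dots,C_{\alpha^r}$ meeting transversally, each being a fibre of $q_m$ over a point of $U_{\bf m}$. Localizing $\OO_{C,s}$ then puts us in the discrete-valuation-ring situation of Proposition~\ref{upper-semicontinuity}.

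At this point I would rerun the argument of Proposition~\ref{upper-semicontinuity} with the (in general non-canonical) reduction $\tau$ playing the role of the canonical reduction on the generic curve. Using the closed embedding $E/P\hookrightarrow GL(V)/Q$ constructed there, $\tau$ corresponds to a line sub-bundle $L_{\chi_K}$ of $F(V)_K$ with $\deg L_{\chi_K}=\deg(\chi_*\tau^*E_l)=0$; properness of the Quot scheme extends it to $L_\chi\subseteq F(V)$ over the DVR with flat quotient, so that $\deg L_{\chi_k}=\deg L_{\chi_K}=0$; saturating and restricting to the components $C_i$ gives $P$-reductions $\gamma_i$ of $E\mid_{C_i}$ with
$$0=\deg L_{\chi_K}=\deg L_{\chi_k}\leqq \deg\widetilde{L}_{\chi_k}=\sum_{i=1}^{\alpha^r}\deg\bigl(\chi_*\gamma_i^*(E\mid_{C_i})\bigr).$$
Now comes the one genuinely new point: each summand on the right is \emph{strictly} negative, because $C_i$ is a fibre of $q_m$ over $U_{\bf m}$ so that $E\mid_{C_i}$ is stable, while $P$ is a proper maximal parabolic and $\chi$ a non-trivial dominant character of $P$, whence $\deg(\chi_*\gamma_i^*(E\mid_{C_i}))<0$ by the very definition of stability. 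This gives $0<0$, a contradiction, so $E_l$ is stable for all $l\geqq m$.

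I expect the only delicate point to be the degree bookkeeping along the degeneration --- that $\deg L_{\chi_K}=\deg L_{\chi_k}$ as Hilbert polynomials of a sheaf flat over the DVR, that passing to the saturation $\widetilde{L}_{\chi_k}$ can only raise the degree, and that on the reduced nodal special curve the degree of a line bundle equals the sum of the degrees of its restrictions to the components --- but every one of these is already carried out inside the proof of Proposition~\ref{upper-semicontinuity}. So the substantive content here is merely the observation that a \emph{stable} restriction forces a strict inequality on each component, which is incompatible with the degree-zero destabilizing reduction produced on the generic curve.
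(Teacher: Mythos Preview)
Your proposal is correct and follows essentially the same route as the paper: openness of stability furnishes $U_{\bf m}$, Proposition~\ref{degeneration} produces the degenerating family with components in $U_{\bf m}$, and the degree inequality extracted from the proof of Proposition~\ref{upper-semicontinuity} (applied to the degree-zero destabilizing reduction $\tau$ rather than a canonical one) is rendered strict on each component by stability, yielding the contradiction $0<0$. Your write-up is in fact more explicit than the paper's about spreading $\tau$ over an open $U_{\bf l}$ before invoking Proposition~\ref{degeneration}, and about the degree bookkeeping along the DVR, but the underlying argument is the same.
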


\begin{proof}
By lemma \ref{Openness of Stability}, let $U_{\bf m}$ denote the non-empty open subset of $S_{\bf m}$ consisting of points $s\in S_{\bf m}$ such that the $E\mid_{q_{m}^{-1}(s)}$
is again stable. Let $l= m +r$. Degenerate $Y_l$ into a reduced curve $C$ with $\alpha^r$ many components, $C_1,\cdots,C_{\alpha^r}$, intersecting transversally, each of which is in $U_{\bf m}$ and such that 
atmost two of them intersect at any point. Suppose $E_l$ is not stable. Since $E_l$ is semistable (see remark following lemma \ref{upper-semicontinuity}), there exists a $P$-reduction $E_{l_P}$ of $E_l$ and a dominant character $\chi$ of $P$ such that 
line bundle obtained by extension of structure group has degree zero. As in the proof of lemma \ref{upper-semicontinuity}, we see this $P$-reduction on $Y_l$ induces
a $P$-reduction on $C$. Let $E^i_{P}$ denote its restriction to $C_i$. Then as in the proof of lemma \ref{upper-semicontinuity}, we see that
deg $(\chi_*E_{l_P})\leqq \underset{i=1}{\overset{\alpha^r}\sum} \text{deg } (\chi_*E^i_{P})$. Since by the choice of $U_{m}$, $E\mid_{C_i}$
is stable we immediately see that the right hand right of this inequality is strictly less than zero and hence so is the left hand side. This contradiction shows that $E_l$ is stable as well. \qed
\end{proof}

 \begin{theorem}{(Stable Restriction Theorem)}\label{Stable Restriction Theorem}
 Let $X, H$ and $G$ be as before and let
 $E$ be a principal $G$-bundle on $X$ which is stable w.r.t. $H$. Then there exists an integer $m_\circ$ such that $\forall~m\geqq m_\circ$, the restriction
 $E\mid_{Y_{\bf m}}$ is again stable.\\
 \indent Consequently by lemma \ref{Openness of Stability}, there exists a non-empty open subset $U_m \subseteq S_{\bf m}$ such that for any 
 $s\in U_m$, the restriction of $E$ to $q_m^{-1}(s)$ is again stable.
 \end{theorem}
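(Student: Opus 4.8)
The plan is to imitate the proof of Theorem~\ref{Semistable Restriction Theorem}, replacing ``positive degree'' throughout by ``degree zero''. First I would reduce to a clean hypothesis. By Theorem~\ref{Semistable Restriction Theorem} there is an $m_1$ with $E\mid_{Y_{\bf m}}$ semistable for all $m\geqq m_1$, and by the lemma immediately preceding this theorem the set of $m$ for which $E\mid_{Y_{\bf m}}$ is stable is upward closed; hence if the theorem failed this set would be empty, so $E\mid_{Y_{\bf m}}$ would be semistable but not stable for \emph{every} $m\geqq m_1$. Assuming this, the goal is to manufacture, for suitable $m$, a reduction of $E$ over a large open subset of $X$ to a proper standard parabolic along which the fundamental weight has non-negative degree, contradicting the stability of $E$ on $X$.

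Fix $m\geqq m_1$. Since $E\mid_{Y_{\bf m}}$ is semistable but not stable on the curve $Y_{\bf m}$, there is a standard maximal parabolic $P$ and a reduction $\sigma_m\colon Y_{\bf m}\to E/P$ with $\deg(\chi_*\sigma_m^{*}E)=0$, where $\chi$ is the fundamental weight of $P$ (by semistability this degree is the only thing that can fail to be negative, and for a maximal parabolic $\chi$ is essentially the only dominant character to test). I would take this reduction canonically, namely coming from the socle (or from a maximal parabolic containing the automorphism parabolic of the socle), so that it behaves well under specialisation; after passing to a subsequence $\{m_k\}$ I may then assume $P$ and $\chi$ are independent of $m$. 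As $\deg_{Y_{\bf m}}(\chi_*\sigma_m^{*}E)=0$ and $[Y_{\bf m}]=\alpha^{m}H^{n-1}$, Proposition~\ref{picard group} yields a unique $L_m\in\mathrm{Pic}(X)$ with $L_m\mid_{Y_{\bf m}}\cong\chi_*\sigma_m^{*}E$, and $\deg_H L_m=0$.

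The heart of the matter is to show that $\{L_m:m\in\{m_k\}\}$ is a bounded family; in fact, exactly as in the remark following Theorem~\ref{Semistable Restriction Theorem}, the $L_m$ are eventually all isomorphic. For $l>m$ in $\{m_k\}$, degenerate $Y_{\bf l}$ by Proposition~\ref{degeneration} into a reduced curve whose $\alpha^{\,l-m}$ components $C_i$ all lie in a fixed open $U_{\bf m}\subseteq S_{\bf m}$ over which every fibre is strictly semistable with constant socle reduction. By the valuative criterion of properness, as in Proposition~\ref{upper-semicontinuity}, $\sigma_l$ spreads over the degenerating family and induces reductions $\gamma^{i}$ on the $C_i$; the inequality of Proposition~\ref{upper-semicontinuity} gives $0=\deg(\chi_*\sigma_l^{*}E)\leqq\sum_i\deg(\chi_*\gamma^{i*}E)$, while semistability of $E\mid_{C_i}$ forces each $\deg(\chi_*\gamma^{i*}E)\leqq 0$; hence every term vanishes, no saturation occurs, and by canonicity each $\gamma^{i}$ is the socle reduction of $E\mid_{C_i}$, so $\chi_*\gamma^{i*}E\cong L_m\mid_{C_i}$. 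Thus the line bundle attached to the spread reduction on the total space is isomorphic to $L_l$ on the generic fibre and has the same degree as $L_m$ on every component of the special fibre; since the intersection form of the degenerate fibre is negative semidefinite with kernel spanned by the class of the whole fibre, this forces that line bundle to be isomorphic to $L_m$ on the total space, whence $L_l\cong L_m$ on $X$ by Proposition~\ref{picard group}.

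Granting boundedness, the rest transcribes the proof of Theorem~\ref{Semistable Restriction Theorem}. Using the embedding $i\colon E/P\hookrightarrow\mathbb{P}(F(V))$ built there ($F$ the $GL(V)$-bundle associated to $E$ via a representation for which $P$ is the stabiliser of a line on which $P$ acts through $\chi$), $\sigma_m$ becomes a line subbundle of $F(V)\mid_{Y_{\bf m}}$ isomorphic to $L_m\mid_{Y_{\bf m}}$, i.e.\ a section of $\mathrm{Hom}(L_m\mid_{Y_{\bf m}},F(V)\mid_{Y_{\bf m}})$. By Remark~\ref{Enrique-Severi for bounded families}, boundedness of $\{L_m\}$ gives a uniform $m_\circ$ beyond which this section lifts to a section of $\mathrm{Hom}(L_m,F(V))$, non-zero on a large open $V_m\supseteq Y_{\bf m}$, hence a section $V_m\to\mathbb{P}(F(V))$; by Lemma~\ref{boundedness of subschemes} the locus of $X$ on which this section factors through $E/P$ contains the curve $Y_{\bf m}$ and therefore equals all of $X$. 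So for $m\geqq m_\circ$ in $\{m_k\}$ we obtain a reduction $V_m\to E/P$ of $E$ to the proper parabolic $P$ along which $\chi$ has degree $\deg_H L_m=0$, contradicting the stability of $E$; this proves the theorem, and the final assertion then follows from Lemma~\ref{Openness of Stability}. The step I expect to be the real obstacle is exactly the boundedness of $\{L_m\}$: on a higher-dimensional $X$ line bundles of $H$-degree $0$ need not form a bounded family, and a naive use of Enriques--Severi is useless since its threshold would depend on $L_m$; the delicate inputs are that strict semistability on the curve can be witnessed by a reduction canonical enough to specialise (a socle/admissible-reduction statement, to be handled with care in positive characteristic) and the intersection-theoretic rigidity identifying $L_l$ with $L_m$ along the degeneration.
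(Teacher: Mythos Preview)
Your overall strategy matches the paper's exactly: reduce to $E_m$ semistable but not stable for all $m\geq m_1$, pass to a subsequence $\{m_k\}$ sharing a common maximal parabolic $P$ and degree-zero reductions, go through the Chevalley embedding $E/P\hookrightarrow \mathbb{P}(F(V))$, extend the resulting line bundles $L_{m_k}$ to $X$ via Proposition~\ref{picard group}, invoke Enriques--Severi for a bounded family together with Lemma~\ref{boundedness of subschemes} to lift the reduction to a large open in $X$, and contradict stability of $E$.

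The one substantive divergence is how you handle boundedness of $\{L_{m_k}\}$. The paper does \emph{not} run a degeneration argument to show the $L_m$ are eventually isomorphic; it simply asserts in one line that ``since these line bundles all have degree zero, they form a bounded family'' and moves on. Your concern that this is not automatic on a higher-dimensional $X$ (degree-zero classes in $NS(X)$ fill a hyperplane, hence infinitely many components of $\mathrm{Pic}(X)$ once $\rho(X)\geq 2$) is well taken, and the paper offers no further justification at that point. Your alternative route---choosing the destabilizing reduction canonically (via a socle) so that the equality case of Proposition~\ref{upper-semicontinuity} forces $L_l\cong L_m$, as in the remark after Theorem~\ref{Semistable Restriction Theorem}---is more scrupulous, but as you yourself flag, the existence and good specialisation of such a canonical ``strictly semistable'' reduction for principal $G$-bundles in arbitrary characteristic is not supplied anywhere in the paper and would need independent work (in the semistable proof that remark succeeds precisely because the HN reduction is canonical; here there is no analogue on offer). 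In short: your proof is the paper's proof plus an honest attempt to justify a step the paper takes for granted; if you accept the paper's one-line boundedness claim, your argument collapses to theirs.
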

 
 \begin{proof}
 
 By theorem \ref{Semistable Restriction Theorem}, there exists an integer $m_1$ such that $\forall m \geqq m_1$, $E\mid_{Y_{\bf m}}$ is again semistable.
The proof now is by contradiction. By lemma \ref{Openness of Stability}, we see that if the restriction $E_m$ is stable for some $m$
then it is stable for all $l\geqq m$. So assume that $E_m$ is not stable for any $m$.
Since there  are only finitely many standard parabolics, choose a sequence of increasing integers $\{m_k\}$ with each $m_k\geqq m_1$
such that there exists a standard maximal parabolic, say $P$, with the property that $\forall k$, there exists a $P$-reduction
$\psi_k: Y_{m_k} \rightarrow E/P$ contradicting the stability of $E_{m_k}$. Let $\chi$ denote the fundamental weight corresponding to $P$. Since
$E_{m_k}$ is semistable $\forall k$, we see that the line bundle $\chi_*\psi^*_k (E)$
has degree $0$. As in the proof of lemma \ref{upper-semicontinuity}, by using the Chevalley semi-invariant
lemma, we get a $G$-equivariant embedding of fiber-bundles $i:E/P \hookrightarrow F/Q$, where $F$ is a
principal $GL(V)$-bundle and $Q$ is a maximal parabolic in $GL(V)$ stabilizing a 1-dimensional subspace of $V$ on which $P$ acts by the character $\chi$. 
Via the embedding $i$, we can think of $\psi_k$ as sections of $F/Q$ and hence we get line sub-bundles of $F(V)\mid_{Y_{m_k}}$.
By lemma \ref{picard group}, extend these to line bundles on all of $X$. 
Since these line bundles all have degree zero, they form a bounded family and hence by choosing these $m_k$'s to be large enough,
we can assume that the restriction map Hom$ (L_{m_k}, F(V)) \rightarrow \text{Hom} (L_{m_k}\mid_{Y_{m_k}}, F(V)\mid_{Y_{m_k}})$
is a bijection. The rest of the proof is similar to that of theorem
\ref{Semistable Restriction Theorem}. We just sketch it briefly for the sake of completeness: By taking schematic inverse
images of the cone over $E/P$ in $L_{m_k}^*\otimes F(V)$ defined by the embedding $i$, via all the global sections of $L_{m_k}^*\otimes F(V)$ for all possible $k$,
we get a bounded family of closed subschemes of $X$ and hence by lemma \ref{boundedness of subschemes} it cannot contain curves of arbitrarily large types. Thus eventually 
for some $m_\circ \gg 0$, the reduction $\psi_{m_\circ}$ thought of as a section of Hom $(L_{m_\circ}\mid_{Y_{m_\circ}}, F(V)\mid_{Y_{m_\circ}})$ via $i$ has a lift to a section 
of Hom$(L_{m_\circ}, F(V))$ which on the large open subset $V \subseteq X$  where it is non-zero (containing $Y_{m_\circ}$) actually comes from a section $V \rightarrow E/P$ lifting the reduction 
$\psi_{m_\circ}$ on $Y_{m_\circ}$. This section naturally has the property that the line bundle obtained by extension of structure group via the unique fundamental
weight corresponding to $P$ has degree zero thereby contradicting the stability of $E$. This contradiction shows that $E\mid_{Y_{\bf {m_\circ}}}$ is stable thereby completing the proof of the theorem.  \qed
\end{proof} 
 
 \bigskip
 
\begin{remark}({\bf Openness of stability in higher relative dimensions}) The stable restriction theorem immediately implies the openness of stability for a principal $G$-bundle over a 
smooth family of projective schemes over a finite-type 
$k$-scheme. The proof is the same as the proof of lemma \ref{Openness of Semistability in higher relative dimensions} with theorem \ref{Semistable Restriction Theorem}
and lemma \ref{Openness of Semistability} in the proof replaced by theorem \ref{Stable Restriction Theorem} and lemma \ref{Openness of Stability} respectively.
\end{remark} 
 
 \bigskip
 


\noindent {\bf Acknowledgements}: I am very grateful to Yogish Holla, both for his initial encouragement to work on this problem as well as for 
going through my proof and suggesting changes, including pointing out a serious error in an earlier version.

\bigskip


\begin{thebibliography}{1111}

\bibitem[1]{1}
Gurjar, Sudarshan ; Nitsure, Nitin. Schematic HN-stratification for families of principal bundles and lambda modules. arXiv: 1208.5572.

\bibitem[2]{2}
Holla, Yogish. Parabolic reductions of principal bundles. arXiv: 0204219.

\bibitem[3]{3}
Mehta, V.B ; Ramanathan, A. Semistable sheaves on projective varieties and their restrictions to curves. Math Ann. 258 (1981/82), no. 3, 213-224.

\bibitem[4]{4}
Mehta, V.B ; Ramanathan, A. Restrictions of stable sheaves and representations of the fundamental group. Invent Math. 77 (1984), no. 1, 163-172.

\bibitem[5]{5}
Behrend, Kai A. Semi-stability of reductive group schemes over curves. Math. Ann. 301 (1995), no. 2, 281-305. 

\bibitem[6]{6}
Biswas, Indranil; Holla, Yogish I. Harder-Narasimhan reduction of a principal bundle. Nagoya Math. Journal. 174 (2004), 201-223. 

\end{thebibliography}
\end{document}